\newtheorem{theorem}{Theorem}[section]
\newtheorem{lemma}[theorem]{Lemma}
\newtheorem{proposition}[theorem]{Proposition}
\numberwithin{equation}{section}
\newenvironment{proof}[1][Proof]{\textbf{#1.} }{\ \rule{0.5em}{0.5em}}
\begin{document}
\baselineskip=18pt

\pagenumbering{arabic}

\begin{center}
{\Large {\bf A  Vector-Host Epidemic Model with Spatial Structure and Age of Infection}}

\bigskip

W.E. Fitzgibbon and J.J. Morgan

Department of Mathematics

University of Houston\\
Houston, TX 77204, USA

\bigskip

Glenn F. Webb and Yixiang Wu

Department of Mathematics

Vanderbilt University\\
Nashville, TN 37212, USA

\bigskip

\vspace{0.2in}
\end{center}

\begin{abstract}
In this paper we study a diffusive age structured epidemic model with disease transmission between vector and host populations. The dynamics of the populations are described by  reaction-diffusion equations, with infection age structure of the host population incorporated to account for incubation periods. The disease is transmitted between vector and host populations in crisscross fashion. The existence of solutions of the model is studied by operator semigroup methods, and the asymptotic behavior of the solution is investigated.   
\end{abstract}

\noindent 2000 Mathematics Subject Classification: 92A15, 35B40, 35M20, 35K57, 35Q92.
\medskip

\noindent Keywords: vector-host, reaction-diffusion, age-structure, incubation period, asymptotic behavior.

\section{Introduction}
The objective of this paper is to analyze a spatial vector-host epidemic model. The model accounts for the random movement of the vector and host population in geographic regions, and the infection age-structure of the infected host population.
Many diseases are transmitted to human  by vectors, such as mosquito-borne diseases malaria, dengue, Zika and bug-borne Chagas. Such diseases are transmitted in a crisscross fashion: infected vectors transmit the disease to susceptible hosts, while susceptible vectors become infected through interactions with infected hosts.  Crisscross models for the circulation of diseases between vectors and hosts have been proposed and studied by many researchers in the past. For example, in \cite{Bailey1957, Dietz1988}  the authors studied the spread of malaria, and in \cite{Busenberg1988, inaba2004mathematical, Velasco1991} the authors studied the spread of Chagas disease by crisscross models. 

The vector and host populations are assumed to be confined in non-coincidental geographic regions. In particular we assume that the region of the vector population is contained in the region of the host population. The dispersal of individuals inside the regions is described by spatial diffusion terms with different diffusion rates for vector and host populations. We note that diffusion has been used to model the spartio-temporal spread of disease by a variety of authors \cite{Allen2008, Capasso1978, Fitzgibbon1994, Fitzgibbon2008, Webb1981}. 

After the susceptible hosts are infected with the disease, they are usually asymptomatic for a certain amount of time before becoming symptomatic and infectious. The incubation or non-infectious period of many  vector-host diseases is appreciable longer than the time it takes for an individual to travel from one place to another. Indeed  an outbreak in one locale could spread silently and globally via infected travelers, only be recognized days or even weeks later \cite{Knobler2006}. In order to incorporate the incubation period of the disease into the model, the host population is assumed to be structured by disease age. We note that the theory of age-structured population models has been well-developed recently, e.g. see \cite{Webb1985theory}. Epidemic models with diffusion and age-structure are studied in  \cite{Fitzgibbon1995, Fitzgibbon1996, Langlais1988, Webb1980}. For a review of diffusive age-structured models, we refer the readers to \cite{Webb2008population}.

Our goal is to understand how an infectious disease arises and spreads through vector-host populations in a geographical setting. Our model formulation assumes that diffusion descibes the movement of individuals, both vectors and hosts, within this geographical region. This assumption is an idealization, since the movement of both vectors and hosts, particularly hosts, may be extremely complex. We argue, however,  that the geographical spread of an epidemic, particularly from an initial small local outbreak, can be modeled by random diffusive processes. In this context diffusion indirectly models the average spatial spread of the underlying micro-biologic infectious agent (viral, bacterial, parasitic), rather then the local-time movement of hosts and vectors. The infectious agent  exists within the host and vector populations, and is not modeled directly. Reaction-diffusion mechanisms indirectly describe the way this infectious agent spreads in space and time within these populations.

Our paper is organized as follows: we propose the  vector-host model in the next section, which includes a system of reaction-diffusion equations for the vector population and a system with diffusion and age-structure for the host population; in section 3, we study the global existence of solution of the model using analytic semigroup to represent solutions of the diffusive age-structured equation; in section 4, we investigate the asymptotic behavior of the solution and prove that the solution always converges to the steady state. 

\section{The Model}
We assume that infected hosts are initially located in a small area of much larger host habitat. Essentially the infected hosts act as vectors introducing the disease to the region. This input corresponds to the disembarkation of infected travelers from a ship, plane or other means of conveyance. We also assume that the vector and host habitats are non-coincident with the vector habitat being a smaller sub-region of the larger host habitat. Recent works on the transmission of disease between species with non-coincident habitats include \cite{Fitzgibbon2004, Fitzgibbon2005, Anita2009}. A salient feature of our consideration will be a noninfectious period of asymptomatic incubation of the virus in host. The incubation period complicates any effort to prophylactically screen for the infection at points of embarkation and disembarkation. We assume that the virus has no deleterious effect on the vectors and that the vectors become immediately infective upon contact with infected infectious hosts with no period of incubation. The virus is assumed to be non-lethal and of relatively short duration in the host and for this reason demographic considerations for the hosts will not be included in the model.

We assume that our host population remain confined to a geographic region $\Omega\subset \mathbb{R}^2$. In particular we assume that $\Omega$ is a bounded domain in  $\mathbb{R}^2$ with smooth boundary $\partial\Omega$. The vector population is assumed to inhabit and remain confined to a bounded subdomain $\Omega_*\subset\Omega$, where $\partial\Omega_*$ is smooth with $\partial\Omega\cap\partial\Omega_*=\varnothing$. We assume that vector population disperse by means of Fickian diffusion with flux term $-d_1(x)\triangledown\rho$ and further require that $d_1(x)\ge d$ for a positive number $d_*$. The time-evolving spatial dependent density of the vector population is denoted by $\rho(x, t)$. The confinement of the vector population to $\Omega_*$ translates as the  Neumann boundary condition $d_1(x)\partial\rho/\partial\eta=0$ for $x\in\partial\Omega_*$, where $\eta$ denotes the unit outward normal on $\partial\Omega_*$. If $\beta(x)\ge \beta_*>0$ and $m(x)\ge m_*>0$ denote spatially dependent growth and logistic control coefficients respectively, the spatio-temporal evolution of the vector population is modeled by the diffusive logistic equation
\begin{equation}\label{rho}
\left\{
\begin{array} {lll}
   \frac{\partial \rho}{\partial t}-\triangledown\cdot d_1(x)\triangledown\rho=\beta(x)\rho-m(x)\rho^2,   &\ \ \ x\in\Omega_*,\ t> 0,\medskip\\
   \frac{\partial \rho}{\partial \eta}=0,  &\ \ \ x\in\partial\Omega_*,\ t> 0, \medskip\\
   \rho(x, 0)=\rho_0(x)>0,  &\ \ \ x\in\Omega_*.
\end{array}
\right.
\end{equation}
%We require that $\beta, m\in C(\overline\Omega_*)$. %We further require that $\mu_0>0$, where $\mu_0$

The following result appears in \cite{Fitzgibbon2008, Cantrell2004}
\begin{theorem}\label{theorem_rho}
Assume that the functions $\beta$ and  $m$ are  strictly positive continuous, $d_1$ is strictly positive continuously differentiable, and $\rho_0$ is nontrivial nonnegative continuous on $\bar\Omega_*$. Then there exists a unique classical solution of \eqref{rho} on $\Omega_*\times (0, \infty)$ such that 
$$0<\rho(x, t)\le \max\{\|\rho_0\|_{\Omega_*, \infty}, \|\beta\|_{\Omega_*, \infty}/m_*\}.$$ Moreover, there exists a unique positive classical  solution $\rho_*$ of
\begin{equation}\label{rho_str}
\left\{
\begin{array} {lll}
   -\triangledown\cdot d_1(x)\triangledown\rho=\beta(x)\rho-m(x)\rho^2,   &\ \ \ x\in\Omega_*,\medskip\\
   \frac{\partial \rho}{\partial \eta}=0,  &\ \ \ x\in\partial\Omega_*, \medskip
\end{array}
\right.
\end{equation}
such that
\begin{equation}
\lim_{t\rightarrow\infty} \|\rho(\cdot, t)-\rho_*\|_{\Omega_*, \infty}=0.
\end{equation}
\end{theorem}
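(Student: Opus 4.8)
The plan is to treat \eqref{rho} as an abstract semilinear parabolic problem and to combine classical comparison arguments with the theory of order-preserving semiflows. First I would realize the diffusion operator $Au=\nabla\cdot d_1(x)\nabla u$ together with the homogeneous Neumann condition on $\partial\Omega_*$ as the generator of an analytic, positivity-preserving semigroup on $C(\bar\Omega_*)$ (or on $L^p(\Omega_*)$, bootstrapping afterward to classical regularity). Since the reaction term $f(x,\rho)=\beta(x)\rho-m(x)\rho^2$ is smooth in $\rho$ and locally Lipschitz uniformly in $x$, the variation-of-constants formulation yields a unique local classical solution on a maximal interval $[0,T_{\max})$.

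Next I would establish the a priori bounds that upgrade this local solution to a global one. For the lower bound, note that $f(x,0)=0$, so $0$ is a subsolution; since $\rho_0$ is nontrivial and nonnegative, the strong maximum principle forces $\rho(\cdot,t)>0$ for every $t\in(0,T_{\max})$. For the upper bound I would check that the constant $M:=\max\{\|\rho_0\|_{\Omega_*,\infty},\ \|\beta\|_{\Omega_*,\infty}/m_*\}$ is a supersolution: a spatially constant function has vanishing diffusion term, while $f(x,M)=M(\beta(x)-m(x)M)\le M(\|\beta\|_{\Omega_*,\infty}-m_*M)\le 0$ by the choice of $M$. The comparison principle then gives $0<\rho(x,t)\le M$ on $(0,T_{\max})$, and this uniform bound precludes finite-time blow-up, so $T_{\max}=\infty$ and the stated estimate holds.

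For the elliptic problem \eqref{rho_str} I would use the method of ordered sub- and supersolutions. The same constant $M$ is a supersolution, and because $\beta_*>0$ a sufficiently small constant $\delta>0$ (any $\delta\le \beta_*/\|m\|_{\Omega_*,\infty}$) is a subsolution with $\delta\le M$; monotone iteration between them produces a classical solution $\rho_*$ with $\delta\le\rho_*\le M$, hence strictly positive. The delicate point is \emph{uniqueness}, and this is where I expect the main work to lie. The crucial structural feature is that the per-capita rate $\rho\mapsto f(x,\rho)/\rho=\beta(x)-m(x)\rho$ is strictly decreasing. Given two positive solutions $u,v$, I would use the identity $f(x,u)/u-f(x,v)/v=m(x)(v-u)$, multiply by $u^2-v^2$, and integrate over $\Omega_*$. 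After integrating the diffusion terms by parts (the Neumann condition discards the boundary contributions), the left-hand integral is, by Picone's identity, a sum of nonnegative terms, while the right-hand side equals $-\int_{\Omega_*}m(x)(u+v)(u-v)^2\,dx\le 0$; the only way both can vanish is $u\equiv v$.

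Finally, convergence follows by exploiting that \eqref{rho} generates an order-preserving semiflow. The solution issuing from the constant supersolution $M$ is nonincreasing in $t$ and the solution issuing from the constant subsolution $\delta$ is nondecreasing in $t$; each is monotone and order-bounded, so each converges (uniformly, by parabolic estimates) to a positive equilibrium, which must be $\rho_*$ by the uniqueness just established. Any solution of \eqref{rho} becomes, after an arbitrarily short time $t_0>0$, continuous and strictly positive on $\bar\Omega_*$, hence trapped between a small constant below and $M$ above; the comparison principle squeezes it between the two monotone extremal orbits, and letting $t\to\infty$ yields $\|\rho(\cdot,t)-\rho_*\|_{\Omega_*,\infty}\to 0$. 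The main obstacle throughout is the uniqueness of $\rho_*$; once that is secured, the monotone-dynamics squeeze delivers the asymptotics essentially for free.
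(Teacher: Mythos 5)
The paper does not actually prove Theorem \ref{theorem_rho}: it is quoted from \cite{Fitzgibbon2008, Cantrell2004}, so the only meaningful comparison is with the standard proof in those references --- and your argument is, in essence, that proof, carried out correctly. The constant $M=\max\{\|\rho_0\|_{\Omega_*,\infty},\|\beta\|_{\Omega_*,\infty}/m_*\}$ is indeed a spatially constant supersolution (since $f(x,M)=M(\beta(x)-m(x)M)\le 0$) and $0$ a subsolution, which gives the stated bound, global existence, and strict positivity via the strong maximum principle. For \eqref{rho_str}, your monotone iteration between the constant subsolution $\delta\le\beta_*/\|m\|_{\Omega_*,\infty}$ and the supersolution $M$ produces a positive solution, and your uniqueness argument --- multiplying the difference of per-capita growth rates by $u^2-v^2$, integrating by parts via Picone's identity so that a sum of nonnegative terms equals $-\int_{\Omega_*}m(x)(u+v)(u-v)^2\,dx\le 0$ --- is exactly the classical sublinearity argument used in Cantrell--Cosner. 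The final squeeze between the monotone orbits emanating from the constants $\delta$ and $M$ is likewise the standard order-preserving-semiflow argument. The only step you leave implicit is why the limit of the \emph{decreasing} orbit from $M$ is a positive equilibrium (so that your uniqueness result identifies it with $\rho_*$): this follows because that orbit dominates, for all time, the increasing orbit from $\delta$, whose limit is already a positive equilibrium; with that one sentence added, the argument is complete.
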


Our model consists of four compartments:
\begin{itemize}
\item $\phi(x, t)$, $x\in\bar\Omega_*, t\ge 0$, denotes the time dependent spatial density of the uninfected vector population who have not contracted disease;
\item $\psi(x, t)$, $x\in\bar\Omega_*, t\ge 0$, dentoes the time dependent spatial density of the infected vector population who are infected with the disease and capable of transmitting it to the hosts;
\item $u(x, t)$, $x\in\bar\Omega, t\ge 0$, denotes the time dependent spatial density of the uninfected host population;
\item $i(x, a, t)$, $x\in\bar\Omega, a\ge0, t\ge 0$, denotes the time and age  dependent spatial density of  the infected host population.
\end{itemize}

Integrating $i(x, a, t)$  with respect to the age variable over $[0, \infty)$ gives $v(x, t)$, the time dependent spatial density of the infected host population:
\begin{equation}
v(x, t)=\int_0^\infty i(x, a, t) da.
\end{equation}

Infected hosts are assumed to go through an incubation period of length $\tau>0$ during which they are neither symptomatic nor infectious. After passing through the incubation period, the infected hosts become infectious. The time dependent spatial density of the infected and infectious host population is computed by
\begin{equation}
v_\tau(x, t)=\int_\tau^\infty i(x, a, t) da.
\end{equation}

 The two critical issues in understanding the transmission of the disease are the recruitment of infected vectors by means of direct contact with infected hosts and the recruitment of infected hosts by means of direct contact with infected vectors. The recruitment of infected vectors occurs via direct contact with infectious infected hosts, which is modeled by the incidence term
\begin{equation}
f_1(x, t, \phi(x, t), v_\tau(x, t))= \sigma_1(x)\phi(x, t)v_\tau(x, t).
\end{equation}

The virus is assumed to have no deleterious effect on the underlying demographics of vector population and we assume that there is no vertical transmission for of the virus among the host population. This considerations produce the following pair of reaction-diffusion type equations:
\begin{equation}\label{vector}
\left\{
\begin{array} {lll}
   \frac{\partial \phi}{\partial t}-\triangledown\cdot d_1(x)\triangledown\phi=\beta(x)\rho-\sigma_1(x)\phi v_\tau-m(x)\rho\phi,   &\ \ \ x\in\Omega_*,\ t> 0,\medskip\\
 \frac{\partial \psi}{\partial t}-\triangledown\cdot d_1(x)\triangledown\psi=\sigma_1(x)\phi v_\tau-m(x)\rho\psi,   &\ \ \ x\in\Omega_*,\ t> 0,\medskip\\
   \frac{\partial \phi}{\partial \eta}=\frac{\partial \psi}{\partial \eta}=0,  &\ \ \ x\in\partial\Omega_*,\ t> 0, \medskip\\
   \phi(x, 0)=\phi_0 ,  &\ \ \ x\in\Omega_*,\\
  \psi(x, 0)=0,  &\ \ \ x\in\Omega_*.
\end{array}
\right.
\end{equation}

With the introduction of the virus into the vectors, vector population is divided into two subclasses of population density, the susceptible class $\phi(x, t)$ and the infective class $\psi(x, t)$, where $\phi(x, t)+\psi(x, t)=\rho(x, t)$. Adding the two partial differential equations we obtain the diffusive logistic equation \eqref{rho} modeling the underlying vector demographics. We have assumed that $\psi(x, 0)=0$ consistent with our focus on the outbreak of the disease in a geographic region that was previously free of the virus. Mathematically we could assume any nonnegative initial data for $\psi(x, 0)$. 

The mechanism for host dispersion across the larger habitant $\Omega$ will also be Fickian diffusion with flux term $-\triangledown\cdot d_2(x)\triangledown u$ having strictly positive diffusivity $d_2(x)\ge d_*>0$ for all $x\in\bar\Omega_*$.  The evolution of $i(x, a, t)$ as we shall see will be governed by a diffusive age transport equation of the form
\begin{equation}
\frac{\partial i}{\partial t}+\frac{\partial i}{\partial a}-\triangledown\cdot d_2(x)\triangledown i=-\lambda(a) i, \ \ \ x\in\Omega, a\ge 0, t\ge0.
\end{equation}
We will subsequently specify an initial condition $i(x, a, 0)$, and an age boundary condition or birth function $i(x, 0, t)=B(x, t)$. The confinement of the host population to its habitant is prescribed by homogeneous Neumann boundary condition  on $\partial\Omega$. Recruitment of infected hosts  (represented by $B(x, t)$)  occurs as a result of the contact of susceptible hosts with infected vectors in $\Omega_*$. This is modeled by the incidence term
\begin{equation}
f_2(x, t, u(x, t), \psi(x, t))= \sigma_2(x)u(x, t)\psi(x, t),
\end{equation}
which becomes a loss term for the class of susceptible hosts.  Contact between hosts and vectors only occurs in the region inhabited by the vector population, and thus we assume $B(x, t)\equiv 0$, $x\in\Omega-\bar\Omega_*$ and make note of the fact that this assumption makes $B(x, t)$ discontinuous on $\Omega$. Recruitment into the infective class occurs at the age boundary $\alpha=0$ and we have
\begin{equation}\label{Birth}
i(x, 0, t)=B(x, t)=\left\{
\begin{array} {lll}
   f_2(x, t, u(x, t), \psi(x, t))=\sigma_2(x)u(x, t)\psi(x, t)   &\ \ \ x\in\bar\Omega_*,\ t> 0,\medskip\\
  0,  &\ \ \ x\in\Omega-\bar\Omega_*, t>0.
\end{array}
\right.
\end{equation}
The virus is assumed to be non-fatal to the host population and removed from the infective class is due to recovery. We also assume that the recovered hosts gain permanent immunity and hence are removed from the ongoing dynamics of the system.  The recovery rate is assumed to be vary with the age of infection and given by $\lambda(a)$ with $\lambda(a)\ge \lambda_*>0$ for all $a\in [0, \infty)$. We remark that it would be natural to assume that $\lambda(a)$ eventually becomes sharply increasing in $a$. 

We model the introduction of infected hosts into the region by specifying an age dependent spatial density for $i(x, a, 0)$. Here we envision the introduction of an extremely small number of infected hosts distributed over an extremely small subarea $\Omega_{**}$ of $\Omega$, i.e. $|\Omega_{**}|<<|\Omega|$ and $\int_{\Omega_{**}}\int_0^\infty i(x, a, 0)dadx<<\int_\Omega u(x, 0)dx$. The infected host population is assumed to have age density $z_0(a)$ initially distributed over $\Omega_{**}$. The distribution is modeled by a probability density function, $k(x)$, defined on $\Omega$, which vanishes identically outside of $\Omega_{**}$. The spatial density of initial infected population is $D(x)=k(x)\int_0^\infty z_0(a)da$. These considerations lead to the following partial differential equations which describe the temporal and spatial circulation of the virus in the host population:
\begin{equation}\label{host}
\left\{
\begin{array} {lll}
   \frac{\partial u}{\partial t}-\triangledown\cdot d_2(x)\triangledown u=-B(x, t),   &\ \ \ x\in\Omega,\ t> 0,\medskip\\
 \frac{\partial i}{\partial t}+\frac{\partial i}{\partial a}-\triangledown\cdot d_2(x)\triangledown i=-\lambda(a) i,   &\ \ \ x\in\Omega, \ a>0,\ t> 0,\medskip\\
i(x, 0, t)=B(x, t),      &\ \ \ x\in\Omega, \ t> 0,\medskip\\
\frac{\partial u}{\partial \eta}=\frac{\partial i}{\partial \eta}=0,  &\ \ \ x\in\partial\Omega,\ a>0, \ t> 0, \medskip\\
   i(a, x, 0)=i_0(a, x)=z_0(a)k(x),  &\ \ \ x\in\Omega,\ a>0,\\
  u(x, 0)= u_0(x),  &\ \ \ x\in\Omega.
\end{array}
\right.
\end{equation}

We make the following assumptions:
\begin{enumerate}
\item[A0.] $d_1\in C^1(\bar\Omega_*)$ with $d_1(x)\ge d_*$ for all $x\in\bar\Omega_*$; $d_2\in C^1(\bar\Omega)$ with $d_2(x)\ge d_*$ for all $x\in\bar\Omega$;
\item[A1.] $z_0\in C^1(\mathbb{R}^+)\cap L_\infty(\mathbb{R}^+)\cap L_1(\mathbb{R}^+)$ is nontrivial nonnegative with  $z_0(0)=0$;
\item[A2.]  $k$ is nonnegative continuous function on $\bar\Omega$ such that $k=0$ on $\Omega-\Omega_{**}$ and
\begin{equation*}
\int_{\Omega_{**}} k(x) dx=1;
\end{equation*}
\item[A3.] $\sigma_1$ and $\sigma_2$ are strictly positive continuous functions on  $\bar\Omega_*$;
\item[A4.] $m$ and $\beta$ are bounded continuous functions on $\bar\Omega$ with $m(x)\ge m_*>0$ and $\beta(x)\ge \beta_*>0$ for all $x\in\bar\Omega$;
\item[A5.] $\lambda\in C^1(\mathbb{R}^+)$ with $\lambda(a)\ge \lambda_*>0$ for all $a\ge 0$;
\item[A6.] $u_0\in C(\bar\Omega)$ with $u_0(x)\ge (\not\equiv) 0$ for $x\in\bar\Omega$ and $\phi_0\in C(\bar\Omega_*)$ with $\phi_0(x)\ge (\not\equiv) 0$ for all $x\in\bar\Omega_*$.
\end{enumerate}
%
%A classical solution to \eqref{vector} and \eqref{host} will be consist of coupled pairs of continuous functions $(\phi(x, t), \psi(x, t))$ and $(u(x, t), i(x, a, t))$ defined on $\bar\Omega_*\times [0, \infty)$ and $\bar\Omega\times[0, \infty)\times[0, \infty)$ respectively, with $v_\tau(x, t)=\int_\tau^\infty i(x, a, t)da$ such that
%\begin{itemize}
%\item The functions $\phi$ and $\psi$ are nonnegative with $\phi, \psi\in C^{2, 1}(\Omega_*\times(0, \infty))$;
%\item The functions $u$ and $i$ are nonnegative with $u\in C^{2, 1}(\Omega\times(0, \infty))$ and $i\in C^{2, 1, 1}(\Omega\times(0, \infty)\times(0, \infty))$, and $i(x, \cdot, t)$ is uniformly integrable on $(\tau, \infty)$ for $(x, t)\in\Omega\times (0, \infty)$;
%\item The equaitons in \eqref{vector} and \eqref{host} are satisfied.
%\end{itemize}

\section{Existence of solutions}
We will use semigroup theory to represent solutions of the diffusive age transport equation, e.g. see \cite{Pazy2012}. We let $T(t), t\ge 0$, be the analytic semigroup on $C(\bar\Omega)$ with infinitesimal generator $A$, which is defined by
\begin{equation}
(Aw)(x)=\triangledown\cdot d_2(x)\triangledown w(x), \ \ w\in D(A) \text{ and } x\in\bar\Omega ,
\end{equation}
with
\begin{equation}
D(A)=\left\{w\in C^2(\bar\Omega): \frac{\partial w}{\partial \eta}=0 \ on \ \partial\Omega\right\}.
\end{equation}

Let $c\in\mathbb{R}$ and $t_0\ge 0$ such that $t_0+c>0$. If $w_0\in C(\bar\Omega)$, then the classical solutions of
 \begin{equation}\label{w}
\left\{
\begin{array} {lll}
   \frac{\partial w}{\partial t}-\triangledown\cdot d_2(x)\triangledown w=-\lambda(t+c)w,   &\ \ \ x\in\Omega,\ t>t_0,\medskip\\
   \frac{\partial w}{\partial \eta}=0,  &\ \ \ x\in\partial\Omega,\ t> t_0, \medskip\\
   w(x, t_0)=w_0(x),  &\ \ \ x\in\Omega.
\end{array}
\right.
\end{equation}
has %abstract variation of parameters representation
representation
%\begin{equation}
%w(\cdot, t)= T(t-t_0)w_0(\cdot)-\int_{t_0}^tT(t-s)\lambda(s+c)w(\cdot, s)ds.
%\end{equation}
\begin{equation}
w(\cdot, t)= e^{-\int_{t_0}^t\lambda(s+c)ds}T(t-t_0)w_0.
\end{equation}
The maximum principle guarantees that $$\|w(\cdot, t)\|_{\Omega, \infty}\le e^{-\lambda_*(t-t_0)}\|w_0\|_{\Omega, \infty} \ \ \text{ for } t\ge t_0.$$ 
%We can alternatively represent solution by defining a linear evolution system $U_c(t, t_0)$ on $C(\bar\Omega)$ by
%\begin{equation}
%(U_c(t, t_0))w_0= w(\cdot, t), \ \ \ \text{for } t\ge t_0.
%\end{equation}
%
%Application of the maximum principle provides an estimate of the operator norm, $\|U_c(t, t_0)\|\le 1$ for all $t\ge t_0\ge 0$. One may further state that %this evolution system is generated by the time dependent family of linear operators $A(t+c)$, $t\ge t_0$, where
%\begin{eqnarray*}
%A(t+c)w&=&\triangledown\cdot d_2(x)\triangledown w-\lambda(t+c)w, \ \ \ w\in D(A(t+c)),\\
%D(A(t+c))&=&\left\{w\in C^2(\bar\Omega): \ \frac{\partial w}{\partial \eta}=0  \text{ on }\partial\Omega\right\}.
%\end{eqnarray*}

We are now in a position to establish our well-posedness result. As previously observed in \cite{Fitzgibbon19952}, the existence of an incubation period $[0, \tau]$ allows us to effectively decouple the nonlinearity and obtain existence results via linear theory.
\begin{theorem}
If asumptions A1-A6  are satisfied, then there exists unique coupled positive solution pairs $\{\phi(x, t), \psi(x, t)\}$ and $\{u(x, t), i(x, a, t)\}$ of  continuous functions  on $\Omega_*\times (0, \infty)$ and $\Omega\times (0, \infty)\times (0, \infty)$ respectively, which satisfy the system \eqref{vector} and \eqref{host}.
\end{theorem}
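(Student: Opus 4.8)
The plan is to solve the coupled system by the \emph{method of steps}, advancing on successive time intervals $[(n-1)\tau, n\tau]$, $n = 1, 2, \dots$, and to exploit the incubation delay $\tau$ so as to replace the nonlinear coupling by a finite cascade of \emph{linear} problems on each interval. The decisive observation is that the infectious density $v_\tau(x,t) = \int_\tau^\infty i(x,a,t)\,da$ at time $t$ depends on the birth term $B$ only through its values up to time $t - \tau$. To see this I would integrate the age--transport equation in \eqref{host} along characteristics: writing $w(\cdot, s) = i(\cdot, a_0 + s, t_0 + s)$ reduces the $i$--equation to \eqref{w} with $c = a_0 - t_0$, so that for $a > t$ one has $i(x,a,t) = e^{-\int_0^t \lambda(a - t + s)\,ds}\,[T(t)(z_0(a - t)k)](x)$, coming purely from the initial data, while for $a < t$ one has $i(x,a,t) = e^{-\int_0^a \lambda(s)\,ds}\,[T(a)B(\cdot, t - a)](x)$, coming from the birth function. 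Splitting the integral defining $v_\tau$ at $a = t$ then shows that only the values $B(\cdot, s)$ with $s \le t - \tau$ enter. In particular, on $[0,\tau]$ the quantity $v_\tau$ is determined entirely by the known data $z_0, k$.

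With this delay structure in hand, the induction step runs as follows. Suppose the solution has been constructed on $[0, (n-1)\tau]$ (the base case $n = 1$ requiring no prior data). Then $v_\tau$ is a known, nonnegative, bounded continuous function on $[(n-1)\tau, n\tau]$. Given $v_\tau$ and the logistic profile $\rho$ from Theorem \ref{theorem_rho}, the $\phi$--equation in \eqref{vector} becomes the \emph{linear} parabolic problem $\partial_t \phi - \nabla\cdot d_1\nabla\phi + (\sigma_1 v_\tau + m\rho)\phi = \beta\rho$, which I solve for $\phi$ by linear semigroup theory; feeding $\phi$ into the $\psi$--equation gives the linear problem $\partial_t\psi - \nabla\cdot d_1\nabla\psi + m\rho\psi = \sigma_1 \phi v_\tau$ for $\psi$. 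Knowing $\psi$, the $u$--equation in \eqref{host} is the linear problem $\partial_t u - \nabla\cdot d_2\nabla u + \sigma_2\psi u = 0$ on $\Omega_*$ (pure diffusion off $\bar\Omega_*$), which I solve for $u$; this determines $B$ through \eqref{Birth}, and the representation above then yields $i$ on $[(n-1)\tau, n\tau]$. Thus each interval is handled by a single sequential sweep $\phi \to \psi \to u \to i$ of linear equations, with no fixed--point iteration required.

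Positivity and the requisite bounds would then be read off from these linear solves. Nonnegativity of $\phi, \psi, u$ follows from the maximum principle, since each has nonnegative zeroth--order coefficient and nonnegative data and source (using $\phi_0 \ge 0$, $\psi_0 = 0$, $u_0 \ge 0$ and, inductively, $v_\tau \ge 0$), while $i \ge 0$ follows from positivity of $T(t)$ and the positive integrating factor in the representation. For boundedness I would use the contraction estimate $\|w(\cdot,t)\|_{\Omega,\infty} \le e^{-\lambda_*(t - t_0)}\|w_0\|_{\Omega,\infty}$ recorded after \eqref{w}: the $a > t$ part of $v_\tau$ is bounded by $e^{-\lambda_* t}\|k\|_\infty\|z_0\|_{L^1}$ (finite by A1), and the $a < t$ part by a decaying integral of $\|B\|_\infty \le \|\sigma_2\|_\infty\|u\|_\infty\|\psi\|_\infty$, which is controlled from the previous step, while $u$ is bounded by $\|u_0\|_\infty$. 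Uniqueness is then immediate: on each interval every subproblem in the cascade is linear with a unique solution, so the constructed solution is the only one, and induction propagates uniqueness to all of $(0,\infty)$.

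The main obstacle, and the point requiring the most care, is the regularity bookkeeping rather than the existence mechanism. The birth function $B$ is discontinuous in $x$ across $\partial\Omega_*$ (as noted after \eqref{Birth}), so one must verify that the smoothing of the analytic semigroup $T$ renders $u$ and $i$ continuous for $a, t > 0$ despite a discontinuous forcing and boundary age--datum; this is why continuity is asserted only on $\Omega \times (0,\infty) \times (0,\infty)$. Equally delicate is the matching of the two branches of the representation for $i$ along the characteristic $a = t$: here the initial--data branch contributes $z_0(0)k = 0$ by assumption A1, while the birth branch contributes $B(\cdot, 0) = \sigma_2 u_0\,\psi(\cdot,0) = 0$ because $\psi_0 = 0$, so the two pieces agree and $i$ is continuous across $a = t$. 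One must also check that the interval solutions patch continuously at the nodes $t = n\tau$, which follows from continuity of the data passed between steps. These verifications are routine given the linear theory, but they are precisely where the hypothesis A1 and the choice $\psi_0 = 0$ are genuinely used.
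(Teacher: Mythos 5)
Your proposal is correct and follows essentially the same route as the paper: the method of steps on intervals of length $\tau$, the characteristic (cohort) representation of $i$ via the semigroup $T(t)$ yielding the two branches for $a\ge t$ and $a<t$, the resulting reduction of each step to a cascade of linear parabolic solves with positivity from the maximum principle and the positivity of $T(t)$, and uniqueness from linearity of each subproblem. The only cosmetic difference is that you solve \eqref{vector} as a triangular sweep $\phi\to\psi$ while the paper treats it as a coupled linear system, and your discussion of the matching of the two branches of $i$ along $a=t$ (using $z_0(0)=0$ and $\psi_0=0$) is somewhat more explicit than the paper's.
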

\begin{proof}
%We choose an arbitrary $T$ such that $0<\tau<T<\infty$. 
Adding the equations for the susceptible and infected hosts produces the diffusive logistic equation
\begin{equation}\label{rho1}
\left\{
\begin{array} {lll}
   \frac{\partial \rho}{\partial t}-\triangledown\cdot d_1(x)\triangledown\rho=\beta(x)\rho-m(x)\rho^2,   &\ \ \ x\in\Omega_*,\ t\in (0, T],\medskip\\
   \frac{\partial \rho}{\partial \eta}=0,  &\ \ \ x\in\partial\Omega_*,\ t\in (0, T], \medskip\\
   \rho(x, 0)=\rho_0(x)=\phi_0\ge (\not\equiv)0,  &\ \ \ x\in\Omega_*.
\end{array}
\right.
\end{equation}
Theorem \ref{theorem_rho} guarantees classical uniformly bounded positive solution and allows us to assume that $\rho(x, t)$ is a known quantity. We initiate a method of steps argument and look for solutions on the time interval $[0, \tau]$ and use a characteristic argument to find a representation of the diffusive age transport equation. Adapting arguments appearing in \cite{Fitzgibbon19952}, we let $c\in\mathbb{R}$ and define the cohort function $w_c(x, t)$. Direct computation yields
\begin{equation}
\frac{\partial w_c(x, t)}{\partial t}=\mathcal{L}\  i(x, t+c, t),
\end{equation}
where $\mathcal{L}$ denotes the formal diffusive age transport operator $\mathcal{L}=\partial/\partial t+\partial/\partial a-\triangledown\cdot d_2(x)\triangledown$. We now introduce $t_c=\max\{0, -c\}$ and observe that the solution of 
\begin{equation*}
\left\{
\begin{array} {lll}
   \frac{\partial w_c}{\partial t}+\frac{\partial w_c}{\partial a}-\triangledown\cdot d_2(x)\triangledown w_c=-\lambda(t+c) w_c,   &\ \ \ x\in\Omega,\ t> t_c,\medskip\\
   \frac{\partial w_c}{\partial \eta}=0,  &\ \ \ x\in\partial\Omega,\ t> t_c.
\end{array}
\right.
\end{equation*}
is
\begin{equation*}
w_c(\cdot, t)= e^{-\int_{t_c}^t\lambda(s+c)ds}T(t-t_c)w_c(\cdot, t_c).
\end{equation*}
If $c=a-t\ge 0$, then $t_c=0$, and we have for $a\ge t$
\begin{eqnarray*}
i(\cdot, a, t)=i(\cdot, a-t+t, t)&=&w_c(\cdot, t)\\
   &=&e^{-\int_{0}^t\lambda(s+c)ds}T(t)w_c(\cdot, 0)\\
  &=&e^{-\int_{0}^t\lambda(s+a-t)ds}T(t)i(\cdot, a-t, 0)\\
  &=&e^{-\int_{0}^t\lambda(s+a-t)ds}T(t)z_0(a-t)k(\cdot).
\end{eqnarray*}
If $c=a-t<0$, then $t_c=t-a$, and we have for $t> a$
\begin{eqnarray*}
i(\cdot, a, t)&=&w_c(\cdot, t)\\
   &=&e^{-\int_{t-a}^t\lambda(s+c)ds}T(a)w_c(\cdot, t-a)\\
  &=&e^{-\int_{t-a}^t\lambda(s+a-t)ds}T(a)i(\cdot, 0, t-a)\\
   &=&e^{-\int_{0}^a\lambda(s)ds}T(a)B(\cdot, t-a).
%   &=&e^{-\int_{0}^a\lambda(s)ds}T(a)(\sigma_2(\cdot)u(\cdot, t-a)\psi(\cdot, t-a)).
\end{eqnarray*}
Hence we have
\begin{equation}\label{irep}
i(x, a, t)=\left\{
\begin{array}{lll}
e^{-\int_{0}^t\lambda(s+a-t)ds}T(t)z_0(a-t)k(x),  \ \ \ &x\in\Omega, t\le a, \\
e^{-\int_{0}^a\lambda(s)ds}T(a)B(x, t-a), \ \ \ &x\in\Omega, t>a.
\end{array}
\right.
\end{equation}

 The infected become infective for $a \ge \tau$. Thus if $t\in [0, \tau]$, we can determine the time dependent spatial density of the infected infective population
\begin{equation*}
v_\tau(x, t)=\int_\tau^\infty i(x, a, t)da=\int_\tau^\infty e^{-\int_0^t\lambda(s+a-t)ds}T(t)z_0(a-t)k(x)da.
\end{equation*}
We observe that the assumptions on $z_0(a)$ and $k(x)$ insures that $v_\tau(x, t)> 0$ for $x\in\Omega$ and $t\in(0, \tau]$. Since $v_\tau(x, t)$ has been calculated and we have seen that we are guaranteed a solution to the diffusive logistic equationm, we can view \eqref{vector} as a coupled linear parabolic system. Standard parabolic theory guarantees a classical solution on $\Omega_*\times [0, \tau]$. Maximum principle arguments now insure that the solution $(\phi(x, t)), \psi(x, t)$ is positive on $\bar\Omega_*\times (0, \tau]$. Since the existence of $\psi(x, t)$ is now assured, the partial differential equation describing the depletion of the susceptible class,
\begin{equation*}
\frac{\partial u}{\partial t}-\triangledown\cdot d_2(x)\triangledown u=-B(x, t)=\left\{
\begin{array}{lll}
 -\sigma_2(x)u(x, t)\psi(x, t), \ \ \ &x\in\Omega_*, t\in(0, \tau],\\
0 \ \ \ &x\in\Omega-\bar\Omega_*, t\in(0, \tau]
\end{array}
\right.
\end{equation*}
can be viewed as linear and therefore there exists a positive  solutions on $\bar\Omega\times (0, \tau]$. Knowing $u(x, t)$ and $\psi(x, t)$ permits calculation of the birth function that provides the mechanism for entry into the infective class at the age boundary $a=0$
\begin{equation*}
i(x, 0, t)=B(x, t)=\left\{
\begin{array}{lll}
 \sigma_2(x)u(x, t)\psi(x, t), \ \ \ &x\in\bar\Omega_*, t\in(0, \tau],\\
0 \ \ \ &x\in\Omega-\bar\Omega_*, t\in(0, \tau].
\end{array}
\right.
\end{equation*}
We complete the cycle of this argument with the observation that our knowledge of $B(x, t)$ and $i_0(x, a)=z_0(a)k(x)$ facilitates direct computation of the solution $i(x, a, t)$ to the diffusive age transport equation on $\bar\Omega\times (0, \infty)\times (0, \tau]$ by \eqref{irep} . By our assumption $z_0(0)=0$ and $\lambda(a)\ge \lambda_*>0$,  $i(x, a, t)$ is uniformly continuous on $\Omega\times (0, \infty)\times (0, \tau]$. By the positivity of the semigroup $T(t)$, $i(x, a, t)$ is positive on $\Omega\times (0, \infty)\times (0, \tau]$.

We then look for a solution for $t\in[\tau, 2\tau]$. Again we find the spatial density of the infective hosts by integrating $i(x, a, t)$ with respect to $a$ on $[\tau, \infty)$. However in this case $t\ge\tau$, we have
\begin{equation*}
v_\tau(x, t)=\int_\tau^\infty i(x, a, t)da=\int_\tau^t i(x, a, t)da+\int_t^\infty i(x, a, t)da.
\end{equation*}
Each of these integrals on the right hand side can be evaluated using previous knowledge. In the case of the second integral, we observe that
\begin{equation}\label{in1}
\int_t^\infty i(x, a, t)da=\int_t^\infty e^{-\int_0^t\lambda(s+a-t)ds}T(t)z_0(a-t)k(x)da.
\end{equation}
In the case of the first integral, we have $\tau\le a<t\le 2\tau$, which implies $t-a<\tau$. Hence
\begin{equation}\label{in2}
\int_\tau^t i(x, a, t)da=\int_\tau^t e^{-\int_0^a\lambda(s)ds}T(a)B(x, t-a)da.
\end{equation}
In the last integral, 
\begin{equation*}
B(x, t-a)=\left\{
\begin{array}{lll}
\sigma_2(x)u(x,t-a)\psi(x, t-a), \ \ \ &t-a \in[0, \tau], x\in\bar\Omega_*,\\
0 , \ \ \ &t-a \in[0, \tau], x\in\Omega-\bar\Omega_*,
\end{array}
\right.
\end{equation*}
which has been obtained from the previous step.
Thus, we can determine both integrals \eqref{in1}-\eqref{in2} that define $v_\tau(x, t)$ for $t\in [\tau, 2\tau]$. We can insert the pre-determined functions $\rho(x, t)$ and $v_\tau(x, t)$ into the system describing the evolution of $\psi$ and $\phi$, and again obtain a linear system. Hence we get a positive  solution on $[\tau, 2\tau]$. Now since $\psi(x, t)$ is determined for $t\in[\tau,2\tau]$, we can reduce the equation for $u$ to a linear equation for which we can readily obtain a positive solution. Knowing $u(x, t)$ and $\psi(x, \tau)$ on $[\tau, 2\tau]$ permits calculation of $B(x, t)$ and hence $i(x, a, t)$ on $[\tau, 2\tau]$. Our analysis thus so far shows that we can determine the solution $(\phi(x, t), \psi(x, t))$ and $(u(x, t), i(x, a, t))$ of positive uniformly continuous functions  on $\Omega_*\times [\tau, 2\tau]$ and $\Omega\times [0, \infty)\times [\tau, 2\tau]$ respectively. It is
evident that the proceeding argument in this manner step our way across $t\in [0, \infty)$, and guarantee the unique positive solution $(\phi(x, t), \psi(x, t))$ and $(u(x, t), i(x, a, t))$ on $\Omega_*\times [0, \infty]$ and $\Omega\times [0, \infty)\times [0, \infty)$ respectively.
\end{proof}

\section{Asymptotic behavior}
Analysis of the long term behavior of solutions will require us the following a priori bounds.
\begin{proposition}\label{prop_bound}
Let A1-A6 hold and let $(\phi(x, t), \psi(x, t), u(x, t), i(x, a, t))$ be the solution of system \eqref{vector} and \eqref{host}. Then there exists  $M>0$ such that the following hold:
\begin{eqnarray*}
&&\sup\{\|\phi(\cdot, t)\|_{\Omega_*, \infty},    \|\psi(\cdot, t)\|_{\Omega_*, \infty}, \|u(\cdot, t)\|_{\Omega, \infty}, \|v(\cdot, t)\|_{\Omega, \infty}, \|v_\tau(\cdot, t)\|_{\Omega, \infty}: \ t> 0\}<M,\\
&&\sup\{\|i(\cdot, a, t)\|_{\Omega, \infty}: \ a\ge0, t>0\}<M.
\end{eqnarray*}
For any $p>1$, there exists $M_p>0$ such that
\begin{eqnarray*}
&&\sup\{\|\partial\phi(\cdot, t)/\partial t\|_{\Omega_*, p},    \|\partial\psi(\cdot, t)/\partial t\|_{\Omega_*, p}, \|\partial u(\cdot, t)/\partial t\|_{\Omega, p}, \|\partial v(\cdot, t)/\partial t\|_{\Omega, p}: \ t> 0\}<M_p,\\
&&\sup\{\|\triangledown\phi(\cdot, t)\|_{\Omega_*, p},    \|\triangledown\psi(\cdot, t)\|_{\Omega_*, p}, \|\triangledown u(\cdot, t)\|_{\Omega, p}, \|\triangledown v(\cdot, t)\|_{\Omega, p}: \ t> 0\}<M_p,\\
&&\sup\{\|\triangledown^2\phi(\cdot, t)\|_{\Omega_*, p},    \|\triangledown^2\psi(\cdot, t)\|_{\Omega_*, p}, \|\triangledown^2 u(\cdot, t)\|_{\Omega, p}, \|\triangledown^2 v(\cdot, t)\|_{\Omega, p}: \ t> 0\}<M_p.
\end{eqnarray*}
\end{proposition}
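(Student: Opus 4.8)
The plan is to first nail down the uniform sup-norm bounds and then bootstrap to the $L^p$ derivative estimates by parabolic regularity. For the $L^\infty$ part I would argue componentwise. Since $\phi,\psi\ge 0$ and $\phi+\psi=\rho$, Theorem \ref{theorem_rho} gives at once $\|\phi(\cdot,t)\|_{\Omega_*,\infty},\|\psi(\cdot,t)\|_{\Omega_*,\infty}\le\max\{\|\phi_0\|_{\Omega_*,\infty},\|\beta\|_{\Omega_*,\infty}/m_*\}$ for all $t$. For $u$ the forcing $-B$ in \eqref{host} is nonpositive, so the maximum principle yields $0\le u(\cdot,t)\le\|u_0\|_{\Omega,\infty}$. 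For $i$ I would read off \eqref{irep}, using that $T(t)$ is a contraction on $C(\bar\Omega)$ and $\lambda\ge\lambda_*>0$: the initial-data branch is bounded by $\|z_0\|_\infty\|k\|_\infty$ and the birth branch by $\|B\|_\infty\le\|\sigma_2\|_\infty\|u\|_\infty\|\psi\|_\infty$, both already controlled. Integrating \eqref{irep} in $a$ then bounds $v$, and $v_\tau\le v$ follows since $i\ge 0$: the initial-data branch contributes $e^{-\lambda_* t}\|k\|_\infty\|z_0\|_{L^1}$ and the birth branch at most $\|B\|_\infty/\lambda_*$, using $\int_0^\infty e^{-\lambda_* a}\,da=1/\lambda_*$ and $z_0\in L^1$.

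For the $L^p$ estimates I would treat $\phi,\psi,u$ as solutions of linear parabolic equations whose right-hand sides $\beta\rho-\sigma_1\phi v_\tau-m\rho\phi$, $\sigma_1\phi v_\tau-m\rho\psi$, and $-B$ are, by the first part, uniformly bounded in $L^\infty$ on the (bounded) spatial domains, hence uniformly bounded in $L^p$ for every $p>1$. For $v$ I would derive its evolution equation by integrating the $i$-equation of \eqref{host} over $a\in[0,\infty)$; using $i(\cdot,\infty,t)=0$ (which follows from the representation and $e^{-\int_0^a\lambda}\to 0$) and $i(\cdot,0,t)=B$ one gets
\[
\frac{\partial v}{\partial t}-\nabla\cdot d_2\nabla v = B(\cdot,t)-\int_0^\infty \lambda(a)\,i(\cdot,a,t)\,da .
\]
With all the sources uniformly bounded in $L^p$, the conclusion follows from interior $L^p$ maximal-regularity estimates for the Neumann heat operator applied on sliding time windows $[\,t-1,t\,]$ with $t\ge 1$: since $\|w(\cdot,t-1)\|_{\Omega,p}$ is uniformly bounded by the first part and the forcing is uniformly bounded in $L^p(\Omega\times(t-1,t))$, maximal regularity bounds $\|\partial_t w\|_{\Omega,p}$ and $\|\nabla^2 w\|_{\Omega,p}$ on the upper half-window uniformly in $t$, and $\|\nabla w\|_{\Omega,p}$ then follows by interpolation. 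I would emphasize that because $B$ is discontinuous across $\partial\Omega_*$, Schauder estimates are unavailable, and this $L^p$ route — yielding $W^{2,p}$ rather than $C^2$ control — is precisely what the statement is built around.

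The main obstacle is bounding the zeroth-order source $\int_0^\infty\lambda(a)\,i(\cdot,a,t)\,da$ in the $v$-equation uniformly in $t$, since A5 imposes no upper bound on $\lambda$. I would split along the two branches of \eqref{irep}. The birth branch $\int_0^t\lambda(a)e^{-\int_0^a\lambda}T(a)B(\cdot,t-a)\,da$ is controlled in $L^\infty$ by $\|B\|_\infty\int_0^\infty\lambda(a)e^{-\int_0^a\lambda(s)ds}\,da=\|B\|_\infty$, the exact identity $\int_0^\infty\lambda(a)e^{-\int_0^a\lambda}\,da=1$ absorbing the growth of $\lambda$. The initial-data branch carries the prefactor $e^{-\int_{a-t}^a\lambda}\le e^{-\lambda_* t}$ and decays exponentially; combined with $z_0\in L^1\cap L^\infty$ it stays uniformly bounded and in fact vanishes as $t\to\infty$, this being the delicate point where the decay from $\lambda\ge\lambda_*$ must be played against the possible growth of $\lambda$. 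Once this source is controlled, the remaining work is routine parabolic regularity, so I expect essentially all the genuine effort to sit in these integral estimates and in checking the maximal-regularity hypotheses uniformly in the window position $t$.
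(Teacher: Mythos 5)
Your $L^\infty$ estimates are correct, and for $\phi$, $\psi$, $u$ and $i$ they coincide with the paper's (nonnegativity plus $\phi+\psi=\rho$, the maximum principle for $u$, the two branches of \eqref{irep} for $i$). For $v$ you take a genuinely different route: you integrate \eqref{irep} in $a$, using sup-norm contractivity of $T(\cdot)$, the factor $e^{-\lambda_* a}$ on the birth branch, and $z_0\in L^1$ on the cohort branch. The paper instead sets $w=u+v$, so that the $\pm B$ terms cancel and $\partial_t w-\nabla\cdot d_2\nabla w=-\int_0^\infty\lambda(a)i\,da\le 0$, and invokes the maximum principle; that trick never touches the $\lambda$-weighted age integral, while your computation gives the extra information that the cohort contribution decays like $e^{-\lambda_* t}$. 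Both are valid for the sup-norm bounds.

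The gaps are in the $L^p$ part. First, maximal regularity on a window $(t-1,t)$ controls $\partial_t w$ and $\nabla^2 w$ in $L^p(\Omega\times(t-1,t))$, i.e.\ in $L^p$ \emph{in time}; the proposition asserts $\sup_{t>0}\|\partial_t w(\cdot,t)\|_{\Omega,p}<M_p$, a pointwise-in-time bound, and the later lemmas use it pointwise (e.g.\ the uniform bound on $\frac{d}{dt}\|\psi(\cdot,t)\|^2_{\Omega_*,2}$ in Lemma \ref{lemma_psi}). You cannot pass from the space-time bound to the pointwise bound when the forcing is merely bounded in time: maximal regularity fails in $C^0$/$L^\infty$-in-time norms, so ``forcing in $L^\infty$ implies $\nabla^2 w(\cdot,t)\in L^p$ uniformly in $t$'' is not a theorem. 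The repair is essentially what the paper gestures at with its fractional-power-space remark: bounded forcing gives uniform bounds on $A^\alpha w(\cdot,t)$ for $\alpha<1$; one then checks that the forcings are H\"older in time (being products of semigroup-smoothed solutions) and obtains $\nabla^2 w(\cdot,t)$ and $\partial_t w(\cdot,t)$ pointwise in $t$ from classical-solution theory, reading the time derivative off the equations. As written, your windowed argument does not deliver the stated bounds.

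Second, you correctly isolate $\int_0^\infty\lambda(a)i(\cdot,a,t)\,da$ as the crux for the $v$-equation --- the paper needs it too, since its claim that the $\partial_t v$ estimate ``just follows'' from \eqref{host} requires exactly this bound, and it is never addressed there --- and your birth-branch identity $\int_0^\infty\lambda(a)e^{-\int_0^a\lambda(s)ds}\,da=1$ is exactly right. But the cohort branch does not close as you state it: the integrand is $\lambda(a)\,e^{-\int_{a-t}^a\lambda}\,z_0(a-t)\,\|k\|_{\Omega,\infty}$, and after pulling out the prefactor $e^{-\lambda_* t}$ you are left with $\int_0^\infty \lambda(b+t)\,z_0(b)\,db$, which assumptions A1 and A5 do not make finite, let alone uniformly bounded in $t$: $\lambda$ may be unbounded and $\lambda z_0\in L^1$ is not assumed. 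Exploiting the full exponential, e.g.\ via $\lambda(a)e^{-\int_{a-t}^a\lambda}=-\partial_a\bigl(e^{-\int_{a-t}^a\lambda}\bigr)+\lambda(a-t)e^{-\int_{a-t}^a\lambda}$ and integration by parts, still costs hypotheses ($z_0'\in L^1$, $\lambda z_0\in L^1$) beyond A1/A5, and with an unbounded, sharply spiked $\lambda$ aligned against bumps of $z_0$ the quantity can genuinely fail to be uniformly bounded. So this step needs either an added hypothesis or a different argument; it is a real gap, though one the paper's own one-sentence treatment of the derivative estimates shares.
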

\begin{proof}
By the comparison principle, we have $$0\le \rho(x, t)\le \max\{\|\beta\|_{\Omega_*, \infty}/m_*, \|\rho_0\|\}.$$ This together with the non-negativity of $\phi(x, t)$ and $\psi(x, t)$ implies that
\begin{equation*}
\sup\{\|\phi(\cdot, t)\|_{\Omega_*, \infty},    \|\psi(\cdot, t)\|_{\Omega_*, \infty}: \ t\ge 0\}\le \max\{\|\beta\|_{\Omega_*, \infty}/m_*, \|\phi_0\|_{\Omega_*, \infty}\}.
\end{equation*}
We have established that $u(x, t)$ is nonnegative. Therefore, $\partial u/\partial t-\triangledown\cdot d_2(x)\triangledown u\le 0$ and the maximum principle implies
\begin{equation*}
\|u(\cdot, t)\|_{\Omega, \infty} \le \|u_0\|_{\Omega, \infty}.
\end{equation*}
We can observe that
\begin{equation*}
\|B(\cdot, t)\|_{\Omega, \infty} \le \|\sigma_2\|_{\Omega_*, \infty}\|u_0\|_{\Omega, \infty}\max\{ \|\beta\|_{\Omega_*, \infty}/m_*, \|\phi_0\|_{\Omega_*, \infty}\}\equiv N.
\end{equation*}
We now use ther representation of $i(x, a, t)$ to observe that
\begin{equation*}
\|i(\cdot, a, t)\|_{\Omega, \infty}\le \left\{
\begin{array} {lll}
  \|k\|_{\Omega, \infty}\|z_0\|_{[0, \infty), \infty},   &\ \ \ t\le a,\medskip\\
   N,  &\ \ \ t>a.
\end{array}
\right.
\end{equation*}

If we integrate the age transport equation on $[0, \infty)$, we get
\begin{equation*}
\frac{\partial v}{\partial t}-\triangledown\cdot d_2(x)\triangledown v= B(x, t)-\int_0^\infty\lambda(a)i(\cdot, a, t)da, \ \ \ x\in\Omega, t\ge0.
\end{equation*}
Let $w(x,t)=u(x,t)+v(x,t)$. We observe that
\begin{equation*}
\frac{\partial w}{\partial t}-\triangledown\cdot d_2(x)\triangledown w\le0.
\end{equation*}
Invoking the maximum principle, we have
\begin{equation*}
\|v(\cdot, t)\|_{\Omega, \infty}\le \|w(\cdot, t)\|_{\Omega, \infty}\le \|w(\cdot, 0)\|_{\Omega, \infty}\le\|k\|_{\Omega, \infty}\|z_0\|_{[0, \infty), \infty}.
\end{equation*}
By $\int_\tau^\infty i(x, a, t) da\le\int_0^\infty i(x, a, t)da$, we see that $\|v_\tau(\cdot, t)\|_{\Omega, \infty}\le \|v(\cdot, t)\|_{\Omega, \infty}$ and  we have obtained a uniform a priori bound for $\|\phi(\cdot, t)\|_{\Omega_*, \infty}$, $\|\psi(\cdot, t)\|_{\Omega_*, \infty}$, $\|u(\cdot, t)\|_{\Omega, \infty}$, $\|v(\cdot, t)\|_{\Omega, \infty}$, and $\|v_\tau(\cdot, t)\|_{\Omega, \infty}$. The uniform estimate on the spatial derivatives follows from a common semigroup calculation in the fractional power spaces, and the estimate for the time derivative just follows from \eqref{vector}, \eqref{host}, and the other estimates.
\end{proof}

We are now in position to provide a complete description of the asymptotic behavior of the solution quadruple $(\phi(x, t), \psi(x, t), u(x, t), i(x, a, t))$.
\begin{lemma}\label{lemma_v}
Suppose that assumptions A1-A6 hold, and let $(\phi(x, t), \psi(x, t), u(x, t), i(x, a, t))$ be the solution of system \eqref{vector} and \eqref{host}. Then we have
\begin{equation*}
\lim_{t\rightarrow\infty} \|v(\cdot, t)\|_{\Omega, 1}=0
\end{equation*}
\end{lemma}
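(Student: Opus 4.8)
The plan is to reduce the problem to a single scalar differential inequality for the total host mass, deduce that $\|v(\cdot,t)\|_{\Omega,1}$ is integrable in time, and then upgrade this to a genuine limit by a uniform‑continuity (Barbalat‑type) argument built on the a priori bounds of Proposition \ref{prop_bound}.

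First I would integrate the age transport equation in \eqref{host} over $a\in[0,\infty)$. Using the age boundary condition $i(x,0,t)=B(x,t)$ together with the decay of $i(x,a,t)$ as $a\to\infty$ (which follows from the representation \eqref{irep}, the integrability of $z_0$, and the factor $e^{-\int_0^a\lambda(s)\,ds}$), the $\partial i/\partial a$ term contributes $-B(x,t)$, so that
\[
\frac{\partial v}{\partial t}-\triangledown\cdot d_2(x)\triangledown v = B(x,t)-\int_0^\infty \lambda(a)\,i(x,a,t)\,da,\qquad x\in\Omega,\ t>0.
\]
Setting $w=u+v$ and adding the equation for $u$ in \eqref{host} cancels the source $B$, giving $\partial_t w-\triangledown\cdot d_2\triangledown w=-\int_0^\infty\lambda(a)\,i\,da\le 0$, which is exactly the inequality already exploited in Proposition \ref{prop_bound}.

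Next I would integrate this identity over $\Omega$. The homogeneous Neumann condition kills the diffusion term, and since $\lambda(a)\ge\lambda_*>0$ and $i\ge0$ one gets
\[
\frac{d}{dt}\int_\Omega w(x,t)\,dx = -\int_\Omega\int_0^\infty \lambda(a)\,i\,da\,dx \le -\lambda_*\,\|v(\cdot,t)\|_{\Omega,1}.
\]
The function $t\mapsto\int_\Omega w\,dx$ is nonnegative (because $u,v\ge0$) and nonincreasing, hence convergent; integrating from $0$ to $T$ and letting $T\to\infty$ yields $\lambda_*\int_0^\infty\|v(\cdot,t)\|_{\Omega,1}\,dt\le\int_\Omega\bigl(u_0+v(\cdot,0)\bigr)\,dx<\infty$, where $v(\cdot,0)=k(\cdot)\int_0^\infty z_0(a)\,da$ is finite by A1 and A2. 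Thus $g(t):=\|v(\cdot,t)\|_{\Omega,1}\in L^1(0,\infty)$.

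Integrability alone does not force $g(t)\to0$, so I would finish by showing $g$ is uniformly continuous and invoking the elementary fact that a uniformly continuous element of $L^1(0,\infty)$ must tend to $0$. Uniform continuity follows immediately from the $L^p$ time‑derivative bound in Proposition \ref{prop_bound}: fixing any $p>1$ and using Hölder's inequality together with the fundamental theorem of calculus,
\[
|g(t_1)-g(t_2)|\le \|v(\cdot,t_1)-v(\cdot,t_2)\|_{\Omega,1}\le |\Omega|^{1-1/p}\int_{t_2}^{t_1}\|\partial_t v(\cdot,s)\|_{\Omega,p}\,ds\le |\Omega|^{1-1/p}M_p\,|t_1-t_2|,
\]
so $g$ is in fact Lipschitz. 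I expect the main obstacle to be the rigorous justification of the boundary behavior at $a=\infty$ when integrating the age equation — namely that $\int_0^\infty\lambda(a)\,i\,da$ is finite and the age flux $i(x,a,t)$ vanishes as $a\to\infty$ even when $\lambda$ is unbounded — and a clean statement of the Barbalat‑type limit lemma; once these are in place the remainder is a short computation resting entirely on bounds already established.
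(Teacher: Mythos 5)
Your proposal is correct and follows essentially the same route as the paper: the paper likewise sets $w=u+v$, integrates the resulting inequality to get $\lambda_*\int_0^\infty\|v(\cdot,s)\|_{\Omega,1}\,ds\le\|w(\cdot,0)\|_{\Omega,1}$, and then invokes the bound on $\|\partial v/\partial t\|$ from Proposition \ref{prop_bound} to pass from time-integrability to the limit. Your write-up merely makes explicit the Barbalat-type step and the H\"older argument for uniform continuity that the paper leaves implicit.
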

\begin{proof}
Setting $w(x, t)=u(x, t)+v(x, t)$ and adding the differential equations for $u(x, t)$ and $v(x, t)$, we find
\begin{equation*}
\frac{\partial w}{\partial t}-\triangledown\cdot d_2(x)\triangledown w+\int_0^\infty \lambda(a)i(x, a, t)da=0, \ \ \ x\in\Omega, t\ge0.
\end{equation*}
Integrating both sides of the equation over $\Omega\times(0, t)$ and noticing that $\lambda(a)\ge \lambda_*>0$, we obtain
\begin{equation*}
\|w(\cdot, t)\|_{\Omega, 1}+\lambda_*\int_0^t\|v(\cdot, s)\|_{\Omega, 1} ds\le \|w(\cdot, 0)\|_{\Omega, 1}.
\end{equation*}
Hence,
\begin{equation}\label{vbound}
\int_0^\infty \|v(\cdot, s)\|_{\Omega, 1} ds     \le \|w(\cdot, 0)\|_{\Omega, 1}.
\end{equation}
This fact together with the uniform a priori bound on $\|\partial v/\partial t\|_{\Omega, 1}$ guaranteed by Proposition \ref{prop_bound}  insure that $$\lim_{t\rightarrow\infty} \|v(\cdot, t)\|_{\Omega, 1}=0.$$
\end{proof}

\begin{lemma}\label{lemma_psi}
Suppose that assumptions A1-A6 hold, and let $(\phi(x, t), \psi(x, t), u(x, t), i(x, a, t))$ be the solution of system \eqref{vector} and \eqref{host}. Then we have
\begin{equation*}
\lim_{t\rightarrow\infty} \|\psi\cdot, t)\|_{\Omega_*, 2}=0
\end{equation*}
\end{lemma}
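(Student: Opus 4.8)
The plan is to run an $L^2$ energy estimate for $\psi$ on $\Omega_*$ and to play the decay of the forcing term $\sigma_1\phi v_\tau$ against the absorption furnished by the reaction term $-m\rho\psi$, which becomes genuinely dissipative once $\rho$ has relaxed toward its positive steady state. Two preliminary reductions set this up. First, since $v_\tau(\cdot,t)=\int_\tau^\infty i(\cdot,a,t)\,da\le\int_0^\infty i(\cdot,a,t)\,da=v(\cdot,t)$ pointwise, Lemma \ref{lemma_v} gives $\|v_\tau(\cdot,t)\|_{\Omega,1}\to 0$; interpolating against the uniform bound $\|v_\tau(\cdot,t)\|_{\Omega,\infty}\le M$ from Proposition \ref{prop_bound} via $\|v_\tau\|_{\Omega,2}^2\le\|v_\tau\|_{\Omega,\infty}\|v_\tau\|_{\Omega,1}$ upgrades this to $\|v_\tau(\cdot,t)\|_{\Omega,2}\to 0$, hence $\|v_\tau(\cdot,t)\|_{\Omega_*,2}\to 0$. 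Second, Theorem \ref{theorem_rho} gives $\rho(\cdot,t)\to\rho_*$ uniformly on the compact set $\bar\Omega_*$, where $\rho_*$ is positive continuous and therefore bounded below by some $\rho_{**}>0$; thus there are $t_0>0$ and $\gamma:=m_*\rho_{**}/2>0$ with $m(x)\rho(x,t)\ge\gamma$ for all $x\in\bar\Omega_*$ and $t\ge t_0$.

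The main step is the energy identity. Multiplying the $\psi$-equation by $\psi$, integrating over $\Omega_*$, and using the homogeneous Neumann condition to discard the boundary term yields
\[
\frac{1}{2}\frac{d}{dt}\|\psi\|_{\Omega_*,2}^2+\int_{\Omega_*}d_1|\triangledown\psi|^2\,dx+\int_{\Omega_*}m\rho\,\psi^2\,dx=\int_{\Omega_*}\sigma_1\phi\,v_\tau\,\psi\,dx.
\]
I would drop the nonnegative gradient term, bound the right-hand side using $\sigma_1\phi\le\|\sigma_1\|_{\Omega_*,\infty}M=:C_1$ together with the positivity of $\psi$ and Young's inequality,
\[
\int_{\Omega_*}\sigma_1\phi\,v_\tau\,\psi\,dx\le\frac{\gamma}{2}\|\psi\|_{\Omega_*,2}^2+\frac{C_1^2}{2\gamma}\|v_\tau\|_{\Omega_*,2}^2,
\]
and use $\int_{\Omega_*}m\rho\,\psi^2\,dx\ge\gamma\|\psi\|_{\Omega_*,2}^2$ for $t\ge t_0$. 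Setting $y(t)=\|\psi(\cdot,t)\|_{\Omega_*,2}^2$ and $h(t)=\frac{C_1^2}{\gamma}\|v_\tau(\cdot,t)\|_{\Omega_*,2}^2$, everything collapses to the scalar differential inequality
\[
y'(t)+\gamma\,y(t)\le h(t),\qquad t\ge t_0,\qquad\text{with } h(t)\to 0.
\]

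To conclude, variation of constants gives $y(t)\le y(t_0)e^{-\gamma(t-t_0)}+\int_{t_0}^t e^{-\gamma(t-s)}h(s)\,ds$. The first term vanishes as $t\to\infty$; for the convolution, given $\delta>0$ choose $T\ge t_0$ with $h(s)<\delta$ for $s\ge T$, split the integral at $T$, and bound the tail by $\delta/\gamma$ and the head by $e^{-\gamma(t-T)}\int_{t_0}^T h$, so $\limsup_{t\to\infty}y(t)\le\delta/\gamma$ for every $\delta$, i.e. $y(t)\to 0$. Taking square roots gives $\|\psi(\cdot,t)\|_{\Omega_*,2}\to 0$.

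The routine parts are the energy identity and the Gronwall-type conclusion. I expect the only real subtleties to be the two reductions at the start: converting the $L^1$ decay of $v_\tau$ supplied by Lemma \ref{lemma_v} into $L^2$ decay (handled by interpolation with the $L^\infty$ bound of Proposition \ref{prop_bound}), and securing a uniform-in-time positive lower bound on the absorption coefficient $m\rho$ (handled through convergence to the strictly positive steady state $\rho_*$ in Theorem \ref{theorem_rho}). Note that the absorption term is indispensable here: diffusion under Neumann conditions alone conserves mass and cannot drive $\psi$ to zero, so the genuine decay comes entirely from the reaction term.
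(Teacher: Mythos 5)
Your proof is correct, but it takes a genuinely different route from the paper's. The paper never needs the lower bound $m\rho\ge\gamma$ that you extract from Theorem \ref{theorem_rho}: it instead uses the structural inequality $0\le\psi\le\phi+\psi=\rho$, which converts the loss term into a quadratic absorption, $m\rho\psi\ge m_*\psi^2$. Integrating the $\psi$-equation over $\Omega_*\times(0,t)$ and invoking $\int_0^\infty\|v(\cdot,s)\|_{\Omega,1}\,ds<\infty$ (inequality \eqref{vbound}, the quantitative bound established inside the proof of Lemma \ref{lemma_v}, not just its stated conclusion) gives the time-integrability $\int_0^\infty\|\psi(\cdot,s)\|^2_{\Omega_*,2}\,ds<\infty$; the paper then concludes by a Barbalat-type argument, since the $L^2$ energy identity and Proposition \ref{prop_bound} make $\frac{d}{dt}\|\psi(\cdot,t)\|^2_{\Omega_*,2}$ uniformly bounded, and an integrable function with bounded derivative must tend to zero. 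You, by contrast, use Lemma \ref{lemma_v} only as a black box ($\|v(\cdot,t)\|_{\Omega,1}\to 0$), upgrade it to $L^2$ decay of $v_\tau$ by interpolating against the $L^\infty$ bound of Proposition \ref{prop_bound}, and close with exponential dissipation and variation of constants. Each approach buys something: yours is more modular and its final step is elementary (it needs neither the integral bound \eqref{vbound} nor the time-derivative bounds of Proposition \ref{prop_bound}), while the paper's trick $\psi\le\rho$ makes the argument independent of the convergence $\rho\to\rho_*$ and of any uniform positivity of $\rho$, so it does not lean on Theorem \ref{theorem_rho} at all. Incidentally, your energy identity is the corrected version of the paper's display \eqref{psi}, which contains typographical slips ($d_2$ and $\int_\Omega$ should read $d_1$ and $\int_{\Omega_*}$, and the integrand $\sigma_1\phi v_\tau v$ should be $\sigma_1\phi v_\tau\psi$).
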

\begin{proof}
If we integrate the differential equation for $\psi$ over $\Omega_*$ and followed by the integration with respect to $t$, we have
\begin{equation*}
\int_{\Omega_*} \psi(x, t) dx = \int_0^t\int_{\Omega_*} \sigma_1(x)\phi(x,s)v_\tau(x,s)dx-\int_0^t\int_{\Omega_*}m(x)\rho(x,s)\psi(x,s)dxds.
\end{equation*}
Recalling $0\le\psi(x, t)\le \psi(x,t)+\phi(x,t)=\rho(x,t)$ and $m(x)\ge m_*>0$, we obtain
\begin{equation*}
\|\psi(\cdot, t)\|_{\Omega_*, 1}+m_*\int_0^t \|\psi(\cdot, s)\|^2_{\Omega_*, 2}ds \le \|\sigma_1\|_{\Omega_*, \infty}\|\phi\|_{\Omega_*, \infty}\int_0^t\|v(\cdot, s)\|_{\Omega, 1}ds.
\end{equation*}
So by \eqref{vbound} and Proposition \ref{prop_bound}, we have
\begin{equation}\label{psibound}
\int_0^\infty \|\psi(\cdot, s)\|^2_{\Omega_*, 2} ds  <\infty.
\end{equation}
 Multiplying both sides of the equation for $\psi$ by $\psi$ and integrating it over $\Omega_*$, we obtain
\begin{equation}\label{psi}
\frac{1}{2}\frac{d}{dt} \|\psi(\cdot, t)\|^2_{\Omega_*, 2} +\int_\Omega d_2(x)|\triangledown \psi(\cdot, t)|^2dx =  \int_{\Omega_*}\sigma_1\phi v_\tau vdx-\int_{\Omega_*}m\rho\psi^2dx.
\end{equation}
By Proposition \ref{prop_bound}, $d\|\psi(\cdot, t)\|_{\Omega_*, 2}^2/dt$ is uniformly bounded for $t>0$, and this together with \eqref{psibound} implies that  
$$\lim_{t\rightarrow\infty}\|\psi(\cdot, t)\|_{\Omega_*, 2}=0.$$
\end{proof}

\begin{lemma}\label{lemma_u}
Suppose that assumptions A1-A6 hold, and let $(\phi(x, t), \psi(x, t), u(x, t), i(x, a, t))$ be the solution of system \eqref{vector} and \eqref{host}. Then there exists a constant $u_*\ge 0$ such that
\begin{equation}\label{ueql}
\lim_{t\rightarrow\infty} \|u(\cdot, t)-u_*\|_{\Omega, 2}=0.
\end{equation}
\end{lemma}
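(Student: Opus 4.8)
The plan is to view $u$ as solving a pure Neumann diffusion equation whose only reaction is the nonnegative loss term $B$: its total mass then decays monotonically to a limit, diffusion homogenizes $u$ toward its spatial mean, and $u_*$ emerges as the limiting average.

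First I would pin down the constant. Integrating the $u$-equation over $\Omega$ and using the Neumann boundary condition, the diffusion term drops out and
\[
\frac{d}{dt}\int_\Omega u(x,t)\,dx = -\int_{\Omega_*}\sigma_2(x)\,u(x,t)\,\psi(x,t)\,dx \le 0,
\]
since $u,\psi,\sigma_2\ge 0$. Hence $t\mapsto\int_\Omega u(\cdot,t)\,dx$ is nonincreasing and bounded below by $0$, so it converges; I set $u_*:=|\Omega|^{-1}\lim_{t\to\infty}\int_\Omega u(\cdot,t)\,dx\ge 0$ and let $\bar u(t):=|\Omega|^{-1}\int_\Omega u(\cdot,t)\,dx$, so that $\bar u(t)\to u_*$.

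Next comes the energy estimate that forces spatial homogenization. Multiplying the $u$-equation by $u$, integrating over $\Omega$, and using the Neumann condition to integrate the diffusion term by parts gives
\[
\frac12\frac{d}{dt}\|u(\cdot,t)\|_{\Omega,2}^2 + \int_\Omega d_2(x)\,|\triangledown u(\cdot,t)|^2\,dx = -\int_{\Omega_*}\sigma_2\,u^2\,\psi\,dx \le 0.
\]
Integrating in time, discarding the nonnegative right-hand side, and using $d_2\ge d_*>0$ yields the space-time bound $\int_0^\infty \|\triangledown u(\cdot,t)\|_{\Omega,2}^2\,dt \le (2d_*)^{-1}\|u_0\|_{\Omega,2}^2<\infty$. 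To upgrade this to pointwise-in-time decay I would run the same Barbalat-type argument used in Lemmas \ref{lemma_v} and \ref{lemma_psi}: with $F(t):=\|\triangledown u(\cdot,t)\|_{\Omega,2}^2$, differentiating and integrating by parts (the boundary term vanishes since $\partial u/\partial\eta=0$) gives $F'(t)=-2\int_\Omega (\Delta u)\,u_t\,dx$, so $|F'(t)|\le 2\|\triangledown^2 u(\cdot,t)\|_{\Omega,2}\,\|u_t(\cdot,t)\|_{\Omega,2}$, which is uniformly bounded by Proposition \ref{prop_bound} with $p=2$. A nonnegative, integrable, uniformly continuous function tends to $0$, so $\|\triangledown u(\cdot,t)\|_{\Omega,2}\to 0$.

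Finally I would transfer gradient decay to the assertion via the Poincar\'e--Wirtinger inequality: since $\int_\Omega(u-\bar u)\,dx=0$, we have $\|u(\cdot,t)-\bar u(t)\|_{\Omega,2}\le C_P\,\|\triangledown u(\cdot,t)\|_{\Omega,2}\to 0$, and combining with $\bar u(t)\to u_*$ gives $\|u(\cdot,t)-u_*\|_{\Omega,2}\le \|u(\cdot,t)-\bar u(t)\|_{\Omega,2}+|\Omega|^{1/2}|\bar u(t)-u_*|\to 0$. The main obstacle is the justification of the uniform continuity of $F(t)$, i.e. the uniform bound on $F'(t)$; this is exactly where the higher-order a priori estimates of Proposition \ref{prop_bound} (uniform $L^p$ control of $\triangledown^2 u$ and $u_t$) are indispensable, and the formal steps of differentiating the $L^2$ norm and integrating by parts likewise rely on the regularity provided there.
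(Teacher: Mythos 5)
Your proof is correct, and its outer skeleton matches the paper's: integrate the $u$-equation over $\Omega$ to get monotone decay of the total mass and hence a limit $u_*$ for $\bar u(t)$, use a Poincar\'e--Wirtinger inequality to control $\|u(\cdot,t)-\bar u(t)\|_{\Omega,2}$, and finish with the triangle inequality. The mechanism in the middle step, however, is genuinely different. The paper works with $y(t)=\|u(\cdot,t)-\bar u(t)\|_{\Omega,2}^2$ directly: testing the equation with $u-\bar u$, converting the dissipation into $-\alpha y$ via Poincar\'e, splitting the incidence term by Cauchy's inequality into $\tfrac{\alpha}{2}y+K\|\psi(\cdot,t)\|_{\Omega_*,2}^2$, and then applying Gronwall (variation of constants) together with L'H\^opital and Lemma \ref{lemma_psi} to conclude $y\to 0$; thus the paper consumes the $L^2$-decay of $\psi$ as an input and needs only the $L^\infty$ bounds of Proposition \ref{prop_bound}. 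You instead test with $u$ itself, observe that the sign of $B$ alone yields the space-time bound $\int_0^\infty\|\triangledown u(\cdot,t)\|_{\Omega,2}^2\,dt<\infty$, and then run a Barbalat argument -- the same device the paper uses to close Lemmas \ref{lemma_v} and \ref{lemma_psi} -- to get $\|\triangledown u(\cdot,t)\|_{\Omega,2}\to 0$. What your route buys: it is entirely independent of Lemma \ref{lemma_psi} (you never need $\psi\to 0$), so your version of Lemma \ref{lemma_u} could be established before any analysis of the vector equations. What it costs: you need the heavier estimates of Proposition \ref{prop_bound}, namely uniform-in-time bounds on $\|\triangledown^2 u(\cdot,t)\|_{\Omega,2}$ and $\|\partial u/\partial t(\cdot,t)\|_{\Omega,2}$, to make $F(t)=\|\triangledown u(\cdot,t)\|_{\Omega,2}^2$ Lipschitz, whereas the paper's Gronwall route avoids the second-order bounds here and, as a by-product, ties the convergence rate of $u$ to the decay rate of $\|\psi(\cdot,t)\|_{\Omega_*,2}$. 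One small remark: the diffusion operator is $\triangledown\cdot d_2(x)\triangledown$, not $d_2\Delta$, but this does not affect your identity $F'(t)=-2\int_\Omega(\Delta u)\,u_t\,dx$, since that is pure integration by parts (using $\partial u/\partial\eta=0$) and never invokes the PDE; your subsequent bound then correctly uses only quantities controlled by Proposition \ref{prop_bound} with $p=2$.
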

\begin{proof}
Let $U(t)=\int_\Omega u(x, t)dx$ and $\bar u(t)=U(t)/|\Omega|$. Integrating both sides of 
\begin{equation}\label{ueq}
\frac{\partial u}{\partial t}-\triangledown\cdot d_2(x)\triangledown u=-B(x, t)
\end{equation} 
over $\Omega$, we get 
$$\frac{\partial }{\partial t}U(t)=- B(x, t)\le 0.$$
So $U(t)$ is decreasing and there exists $u_*\ge 0$ such that 
$$\lim_{t\rightarrow \infty}\bar u(t)=u_*.$$ 

By the Poincare inequality, there exists $C>0$ such that for all $t> 0$
\begin{equation}\label{poincare}
 \|u(\cdot, t)-\bar u(t)\|_{\Omega_*, 2}\le  C\|\triangledown u(\cdot, t)\|_{\Omega_*, 2}.
\end{equation}
Noticing \eqref{ueq} and \eqref{poincare}, we compute
\begin{equation*}
\begin{array} {lll}
\frac{1}{2}\frac{d}{dt} \int_\Omega (u-\bar u)^2dx&=\int_\Omega (u-\bar u)(\triangledown\cdot d_2\triangledown u- B(x, t))dx\\
&=-\int_\Omega d_2 |\triangledown u|^2 dx-\int_\Omega (u-\bar u)B(x, t) dx\\
&\le -d_*C\int_\Omega (u-\bar u)^2dx+\frac{1}{2}d_*C\int_\Omega (u-\bar u)^2dx +\frac{ K}{2}\int_{\Omega_*}|\psi(\cdot, t)|^2dx
\end{array}
\end{equation*}
for some $K>0$, where we used Cauchy's inequality in the last step. Then by the Gronwall's inequality, we have
\begin{equation*}
 \|u(\cdot, t)-\bar u(t)\|^2_{\Omega_*, 2}\le \|u_0-\bar u_0\|^2_{\Omega_*, 2} K e^{-d_*Ct} e^{\int_0^t e^{d_*Cs}\|\psi(\cdot, s)\|^2_{\Omega_*, 2}ds }.
\end{equation*}
By Lemma \ref{lemma_psi}, we have
\begin{equation*}
\lim_{t\rightarrow\infty} e^{-d_*Ct} e^{\int_0^t e^{d_*Cs}\|\psi(\cdot, s)\|^2_{\Omega_*, 2}ds } = \lim_{t\rightarrow \infty} \frac{e^{d_*Ct} \|\psi(\cdot, t)\|^2_{\Omega_*, 2}}{ d_*C e^{d_*Ct}}=0.
\end{equation*}
It then follows that 
\begin{equation*}
\lim_{t\rightarrow\infty} \|u(\cdot, t)-\bar u(t)\|_{\Omega, 2}=0.
\end{equation*}
So we have
\begin{equation*}
\lim_{t\rightarrow\infty} \|u(\cdot, t)-u_*\|_{\Omega, 2}\le  \lim_{t\rightarrow\infty} \|u(\cdot, t)-\bar u(t)\|_{\Omega, 2} + \lim_{t\rightarrow\infty} \|\bar u(t)-u_*\|_{\Omega, 2}=0.
\end{equation*}
\end{proof}

\begin{theorem}\label{theorem_con}
Let A1-A6 hold and let $(\phi(x, t), \psi(x, t), u(x, t), i(x, a, t))$ be the solution of system \eqref{vector} and \eqref{host}. Then we have
\begin{equation*}
\lim_{t\rightarrow\infty} \|v(\cdot, t)\|_{\Omega, \infty}=0, \ \text{ and }\ \lim_{t\rightarrow\infty} \|\psi(\cdot, t)\|_{\Omega, \infty}=0,
\end{equation*}
and there exists a constant $u_*>0$ such that
\begin{equation*}
\lim_{t\rightarrow\infty} \|u(\cdot, t)-u_*\|_{\Omega, \infty}=0;
\end{equation*}
Moreover,
\begin{equation}\label{con_phi}
\lim_{t\rightarrow\infty} \|\phi(\cdot, t)-\rho_*\|_{\Omega_*, \infty}=0,
\end{equation}
where $\rho_*$ is the unique positive solution of
\begin{equation*}
\left\{
\begin{array} {lll}
   -\triangledown\cdot d_1(x)\triangledown\rho=\beta(x)\rho-m(x)\rho^2,   &\ \ \ x\in\Omega_*,\medskip\\
   \frac{\partial \rho}{\partial \eta}=0,  &\ \ \ x\in\partial\Omega_*. \medskip
\end{array}
\right.
\end{equation*}
\end{theorem}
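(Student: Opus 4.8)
The plan is to bootstrap the weak-norm convergences already obtained in Lemmas \ref{lemma_v}--\ref{lemma_u} up to the uniform norm by interpolation, to read off the convergence of $\phi$ from the logistic identity $\rho=\phi+\psi$, and to establish strict positivity of $u_*$ by a separate mass-balance argument.

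First, for the three uniform-convergence statements I would invoke a Gagliardo--Nirenberg interpolation inequality on the bounded two-dimensional domains $\Omega$ and $\Omega_*$. Since Proposition \ref{prop_bound} furnishes, for every $p>1$, a uniform-in-time bound on $\|v(\cdot,t)\|_{\Omega,W^{2,p}}$, $\|\psi(\cdot,t)\|_{\Omega_*,W^{2,p}}$ and $\|u(\cdot,t)\|_{\Omega,W^{2,p}}$, while Lemmas \ref{lemma_v}, \ref{lemma_psi} and \ref{lemma_u} give $\|v\|_{\Omega,1}\to0$, $\|\psi\|_{\Omega_*,2}\to0$ and $\|u-u_*\|_{\Omega,2}\to0$, an estimate of the form $\|f\|_\infty\le C\|f\|_{W^{2,p}}^{\theta}\|f\|_{L^q}^{1-\theta}$ with $\theta\in(0,1)$ immediately upgrades each lower-order decay to uniform decay. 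I would apply it to $f=v$, $f=\psi$, and $f=u-u_*$ (noting that $u_*$ is constant, so its $W^{2,p}$ norm is controlled by that of $u$). This yields $\|v(\cdot,t)\|_{\Omega,\infty}\to0$, $\|\psi(\cdot,t)\|_{\Omega_*,\infty}\to0$, and $\|u(\cdot,t)-u_*\|_{\Omega,\infty}\to0$.

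The convergence \eqref{con_phi} of $\phi$ then requires no new work: since $\phi+\psi=\rho$ solves the logistic problem \eqref{rho1}, Theorem \ref{theorem_rho} gives $\|\rho(\cdot,t)-\rho_*\|_{\Omega_*,\infty}\to0$, and combined with the just-established $\|\psi(\cdot,t)\|_{\Omega_*,\infty}\to0$ the triangle inequality applied to $\phi=\rho-\psi$ yields $\|\phi(\cdot,t)-\rho_*\|_{\Omega_*,\infty}\to0$.

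The one genuinely delicate point, and the step I expect to be the main obstacle, is showing $u_*>0$ rather than merely $u_*\ge0$. Writing $U(t)=\int_\Omega u\,dx$, integration of \eqref{ueq} gives $U'(t)=-\int_{\Omega_*}\sigma_2 u\psi\,dx\ge-\|\sigma_2\|_{\Omega_*,\infty}\|\psi(\cdot,t)\|_{\Omega_*,\infty}\,U(t)$, so Gronwall yields $U(t)\ge U(0)\exp\!\bigl(-\|\sigma_2\|_{\Omega_*,\infty}\int_0^t\|\psi(\cdot,s)\|_{\Omega_*,\infty}\,ds\bigr)$; hence $u_*=\lim U(t)/|\Omega|>0$ as soon as $\int_0^\infty\|\psi(\cdot,s)\|_{\Omega_*,\infty}\,ds<\infty$. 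I would obtain the latter by contradiction: assuming $u_*=0$ forces $\|u(\cdot,t)\|_{\Omega,\infty}\to0$, which makes the birth term satisfy $\|B(\cdot,t)\|_{\Omega,\infty}\le\|\sigma_2\|_{\Omega_*,\infty}\|u(\cdot,t)\|_{\Omega,\infty}\|\psi(\cdot,t)\|_{\Omega_*,\infty}$ with an arbitrarily small coefficient on a tail $[T,\infty)$. Feeding this into the representation \eqref{irep} of $i$ (so that $\|v_\tau(\cdot,t)\|_{\Omega,\infty}$ is controlled by an exponentially weighted convolution of past $\|B\|_\infty$, up to exponentially decaying initial-data terms) and into the variation-of-parameters bound for $\psi$ (whose linear sink $m\rho\psi$ has rate bounded below, since $\rho\to\rho_*>0$) produces a closed renewal inequality for $\int_T^\infty\|\psi\|_{\Omega_*,\infty}\,ds$ whose self-coupling constant is proportional to the small coefficient; for $T$ large this loop is contractive, giving $\int_0^\infty\|\psi\|_{\Omega_*,\infty}\,ds<\infty$ and hence, via the mass inequality above, $u_*>0$, contradicting $u_*=0$. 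The care needed in closing this renewal loop (the Fubini bookkeeping, and checking that the non-feedback contributions are time-integrable) is where the real effort lies; all of the interpolation steps above are routine by comparison.
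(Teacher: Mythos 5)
Your proposal is correct and takes essentially the same route as the paper: you upgrade the weak-norm decay of Lemmas \ref{lemma_v}--\ref{lemma_u} to uniform convergence using the $W^{2,p}$ bounds of Proposition \ref{prop_bound} (via Gagliardo--Nirenberg interpolation where the paper uses compact Sobolev embedding of the orbits --- an equivalent device), you obtain \eqref{con_phi} from $\phi=\rho-\psi$ together with Theorem \ref{theorem_rho}, and you prove $u_*>0$ by the paper's own contradiction scheme: assuming $u_*=0$ makes the coefficient in the $\psi$--$v$ feedback loop small, a renewal-type estimate then gives $\int_0^\infty\|\psi(\cdot,s)\|_{\Omega_*,\infty}\,ds<\infty$ (the paper's Lemma \ref{lemma_psibound}, done there with semigroup bounds and Gronwall's inequality rather than your tail-integral contraction), and the mass balance for $U(t)$ yields $u_*>0$, a contradiction.
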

\begin{proof} 
By the Sobolev imbedding theorem, we have that the imbeddings $W^{1, p}(\Omega)\subseteq C(\bar\Omega)$ and $W^{1, p}(\Omega_*)\subseteq C(\bar\Omega_*)$ are compact for $p>n/2$. So by Proposition \ref{prop_bound}, the orbits $\{\phi(\cdot, t), t\ge 1\}$ and $\{\psi(\cdot, t), t\ge 1\}$ are precompact in $C(\bar\Omega_*)$, and $\{u(\cdot, t), t\ge 1\}$ and $\{v(\cdot, t), t\ge 1\}$ are precompact in $C(\bar\Omega)$. Then the uniform convergence of $v, \psi$ and $u$ just follows from Lemmas \ref{lemma_v}-\ref{lemma_u}.

Theorem \ref{theorem_rho} states that  $$\lim_{t\rightarrow\infty} \|\rho(\cdot, t)-\rho_*\|_{\Omega_*, \infty}=0.$$ Then \eqref{con_phi} follows from the uniform convergence of $\psi$ to zero.

To complete the proof, we still need to show $u_*>0$, and this will be done in the following two lemmas.
\end{proof}

\begin{lemma}\label{lemma_psibound}
Suppose that assumptions A1-A6 hold, and let $(\phi(x, t), \psi(x, t), u(x, t), i(x, a, t))$ be the solution of system \eqref{vector} and \eqref{host}. If $u_*=0$ in Theorem \ref{theorem_con}, then
\begin{equation}\label{psi_uni}
 \int_0^\infty \|\psi(\cdot, s)\|_{\Omega_*, \infty} ds<\infty.
\end{equation}
\end{lemma}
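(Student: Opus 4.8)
The plan is to reduce the claim to a self-improving (contraction) estimate for the two scalar quantities $P(t):=\|\psi(\cdot,t)\|_{\Omega_*,\infty}$ and $V(t):=\|v_\tau(\cdot,t)\|_{\Omega,\infty}$, exploiting two inputs. First, the hypothesis $u_*=0$ together with Theorem \ref{theorem_con} gives $\|u(\cdot,t)\|_{\Omega,\infty}\to 0$, so for any $\epsilon>0$ there is $t_1$ with $\|u(\cdot,t)\|_{\Omega,\infty}<\epsilon$ for $t\ge t_1$. Second, Theorem \ref{theorem_rho} gives $\rho(\cdot,t)\to\rho_*$ uniformly with $\rho_*\ge c_0>0$ on the compact set $\bar\Omega_*$, so there is $t_0$ with $m(x)\rho(x,t)\ge\delta:=m_*c_0/2>0$ for all $t\ge t_0$. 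Set $t_2=\max\{t_0,t_1\}$; on $[0,t_2]$ all quantities are bounded by the constant $M$ of Proposition \ref{prop_bound}, so only the tail $t\ge t_2$ needs work.

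First I would estimate $P$. Writing the $\psi$-equation in \eqref{vector} as $\partial_t\psi-\triangledown\cdot d_1\triangledown\psi+m\rho\,\psi=\sigma_1\phi v_\tau$ and letting $S_\mu(t,s)$ be the positive evolution family generated by $-\triangledown\cdot d_1\triangledown+m\rho(\cdot,t)$ under the Neumann condition, the maximum principle applied to $e^{\delta(t-s)}S_\mu(t,s)f$ yields $\|S_\mu(t,s)f\|_{\Omega_*,\infty}\le e^{-\delta(t-s)}\|f\|_{\Omega_*,\infty}$ for $s\ge t_0$. Duhamel's formula then gives, for $t\ge t_2$,
\begin{equation*}
P(t)\le e^{-\delta(t-t_2)}\|\psi(\cdot,t_2)\|_{\Omega_*,\infty}+C_1\int_{t_2}^t e^{-\delta(t-s)}V(s)\,ds,
\end{equation*}
with $C_1=\|\sigma_1\|_{\Omega_*,\infty}\sup_t\|\phi(\cdot,t)\|_{\Omega_*,\infty}$ finite by Proposition \ref{prop_bound}. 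Next I would estimate $V$ from the representation \eqref{irep}. Splitting $v_\tau(x,t)=\int_\tau^\infty i\,da$ at $a=t$, the initial-data part is bounded by $e^{-\lambda_* t}\|k\|_{\Omega,\infty}\|z_0\|_{[0,\infty),1}$ since $T(t)$ is an $L^\infty$-contraction and $\lambda\ge\lambda_*$, while the birth part is $\int_\tau^t e^{-\int_0^a\lambda(s)ds}T(a)B(\cdot,t-a)\,da$ with $\|B(\cdot,r)\|_{\Omega_*,\infty}\le\|\sigma_2\|_{\Omega_*,\infty}\|u(\cdot,r)\|_{\Omega,\infty}P(r)$. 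Using $\|u(\cdot,r)\|_{\Omega,\infty}<\epsilon$ for $r\ge t_2$ and absorbing the (exponentially small) contribution of the range $r<t_2$ and of the initial-data part into a single term $C_5e^{-\lambda_*(t-t_2)}$, I obtain for $t\ge t_2$
\begin{equation*}
V(t)\le C_3\,\epsilon\int_0^t e^{-\lambda_* a}P(t-a)\,da+C_5e^{-\lambda_*(t-t_2)},\qquad C_3=\|\sigma_2\|_{\Omega_*,\infty}.
\end{equation*}

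To close the argument I would integrate both inequalities over a finite interval $[0,T]$, where $\mathcal{P}_T:=\int_0^T P$ and $\mathcal{V}_T:=\int_0^T V$ are finite by continuity. Using boundedness by $M$ on $[0,t_2]$, Tonelli's theorem to exchange the order of integration, and the kernel identities $\int_0^\infty e^{-\delta r}\,dr=\delta^{-1}$ and $\int_0^\infty e^{-\lambda_* a}\,da=\lambda_*^{-1}$, these become $\mathcal{P}_T\le \frac{C_1}{\delta}\mathcal{V}_T+K_1$ and $\mathcal{V}_T\le \frac{C_3\epsilon}{\lambda_*}\mathcal{P}_T+K_2$, where $K_1,K_2$ are finite constants (involving $Mt_2$) independent of $T$. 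Substituting gives $\mathcal{P}_T\le\theta\,\mathcal{P}_T+K$ with $\theta=\frac{C_1C_3\epsilon}{\delta\lambda_*}$ and $K$ finite and $T$-independent. Since $\|u(\cdot,t)\|_{\Omega,\infty}\to0$ lets me fix $\epsilon$ so small that $\theta<1$, I conclude $\mathcal{P}_T\le K/(1-\theta)$ uniformly in $T$; letting $T\to\infty$ yields $\int_0^\infty\|\psi(\cdot,s)\|_{\Omega_*,\infty}\,ds<\infty$, which is \eqref{psi_uni}.

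The main obstacle, and the reason neither the absorption nor the hypothesis $u_*=0$ can be dispensed with, is that the crude route—bounding $\psi$ by the solution of the pure-diffusion equation with source $\sigma_1\phi v_\tau$ and invoking the $L^1\!\to\!L^\infty$ smoothing of the Neumann heat semigroup—fails on two counts: that semigroup does not decay but converges to the spatial average, and in dimension $n=2$ its smoothing kernel $r^{-n/2}=r^{-1}$ is not integrable near $r=0$. Keeping the absorptive term $m\rho\ge\delta$ supplies genuine exponential decay in the sup-norm and removes the small-time singularity, while the smallness $\|u\|_{\Omega,\infty}<\epsilon$ turns the vector$\leftrightarrow$host feedback into a contraction. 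The remaining care is bookkeeping: verifying that every constant ($C_1,C_3,\delta,\lambda_*,K$) is independent of $T$, that the transitional interval $[t_2,t_2+\tau]$ (bounded, hence harmless) is handled, and that working on $[0,T]$ with a priori finite $\mathcal{P}_T$ legitimately lets the contraction inequality be passed to the limit $T\to\infty$.
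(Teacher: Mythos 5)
Your proposal is correct, and its core machinery coincides with the paper's: both proofs exploit exactly the same two ingredients --- the hypothesis $u_*=0$ forcing $\|u(\cdot,t)\|_{\Omega,\infty}<\epsilon$ for large $t$, and the convergence $\rho(\cdot,t)\to\rho_*>0$ supplying an absorption term $m\rho\ge\delta>0$ --- and both set up the same pair of coupled Duhamel estimates with exponentially decaying kernels, one for $\|\psi(\cdot,t)\|_{\Omega_*,\infty}$ driven by $\sigma_1\phi v_\tau$ and one for the infective host density driven by $B=\sigma_2 u\psi$, with the smallness $\epsilon$ entering through $B$. Where you genuinely diverge is in how the loop is closed. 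The paper substitutes one inequality into the other to obtain a single Volterra inequality for $\|\psi(\cdot,t)\|_{\Omega_*,\infty}$, evaluates the convolution of the two exponential kernels (which is why it needs the technical condition $\lambda_1<\lambda_*$), and then applies Gronwall's inequality with $\epsilon=(\lambda_*-\lambda_1)\lambda_1/(2M_4)$ to conclude the stronger statement $\|\psi(\cdot,t)\|_{\Omega_*,\infty}\le M_5e^{-\lambda_1 t/2}$, from which \eqref{psi_uni} is immediate. You instead integrate both inequalities over $[0,T]$, use Tonelli and the $L^1$ masses $\delta^{-1}$, $\lambda_*^{-1}$ of the kernels, and close a linear contraction $\mathcal{P}_T\le\theta\mathcal{P}_T+K$ with $\theta<1$ uniformly in $T$, then let $T\to\infty$. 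Your route is somewhat more elementary and robust: it avoids Gronwall, avoids the kernel-convolution computation, and needs no relation between $\delta$ and $\lambda_*$; it also works directly from the representation \eqref{irep} and a time-dependent evolution family rather than freezing constant absorption rates into the generators as the paper does. The price is that you obtain only the integrability asserted in the lemma, whereas the paper's argument yields genuine exponential decay of the sup norm, a stronger intermediate fact (though the paper itself uses nothing beyond integrability in the subsequent lemma). Both arguments correctly order the choices so there is no circularity: the constants in the contraction ($C_1$, $C_3$, $\delta$, $\lambda_*$ in yours; $M_4$, $\lambda_1$, $\lambda_*$ in the paper's) are independent of $\epsilon$, so $\epsilon$ may be fixed first and the transition time afterwards.
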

%\begin{proof}
% Integrating both sides of the inequality \eqref{psi} with respect to $t$, we have
%\begin{equation*}
%\frac{1}{2}  \|\psi(\cdot, t)\|^2_{\Omega_*, 2} +d_*\int_0^t\|\triangledown \psi(\cdot, s)\|_{\Omega_*, 2}^2ds\le \|\sigma_1\|_{\Omega_*, \infty} \|\psi(\cdot, t)\|_{\Omega_*, \infty}^2   \int_0^t\|v(\cdot, s)\|_{\Omega, 1}ds.
%\end{equation*}
%So by Propsition \ref{prop_bound} and \eqref{vbound}, 
% $$ \int_0^\infty\|\triangledown \psi(\cdot, s)\|_{\Omega_*, 2}^2ds<\infty.$$ 
%It then follows from \eqref{psibound} and Cauchy's inequality that 
% $$ \int_0^\infty \|\psi(\cdot, s)\|_{\Omega_*, 1} ds  <\infty,    \ \ \ \text{ and }\ \ \int_0^\infty\|\triangledown \psi(\cdot, s)\|_{\Omega_*, 2}^2ds<\infty.$$ 
%\end{proof}
\begin{proof}
By the assumption $u_*=0$, for a given $\epsilon>0$ (to be specified later) we may assume without loss of generality that $\|u(\cdot, t)\|_{\Omega, \infty}<\epsilon$ for all $t\ge 0$. Since $\rho(\cdot, t)\rightarrow \rho_*>0$  in $C(\bar\Omega_*)$ as $t\rightarrow \infty$ and $m(x)\ge m_*>0$, we may assume without loss of generality that $m(x)\rho(x, t)\ge \lambda_1$ with some positive number $\lambda_1$ for all $x\in\bar\Omega_*$ and $t\ge 0$. As for convenience, we choose $\lambda_1$ small such that $\lambda_*>\lambda_1$ where $\lambda_*$ is in assumption A5. 

  Let $A_1$ be an operator in $C(\bar \Omega_*)$ defined as 
$$A_1 w=\triangledown\cdot d_1\triangledown w-\lambda_1 w, \ \ \ w\in D(A_1),$$
$$D(A_1)=\{w\in C(\bar\Omega_*): \ \ w\in C^2(\bar\Omega_*) \ \text{ and } \ \frac{\partial w}{\partial \eta}=0  \text{ on } \partial\Omega_*\}.$$

  Let $A_2$ be an operator in $C(\bar \Omega)$ defined as 
$$A_2 w=\triangledown\cdot d_2\triangledown w-\lambda_* w, \ \ \ w\in D(A_2),$$
$$D(A_2)=\{w\in C(\bar\Omega): \ \ w\in C^2(\bar\Omega) \ \text{ and } \ \frac{\partial w}{\partial \eta}=0  \text{ on } \partial\Omega\}.$$
Let $\{T_1(t): t\ge 0\}$ be the semigroup generated by $A_1$ in $C(\bar\Omega_*)$ and $\{T_2(t): t\ge 0\}$ be the semigroup generated by $A_2$ in $C(\bar\Omega)$. There exists $M_1>0$ such that 
$$\|T_1(t)\|\le M_1e^{-\lambda_1 t} \ \ \ \text{ and } \  \ \ \|T_2(t)\|\le M_1e^{-\lambda_* t}.$$

By the second equation of \eqref{vector}, we have 
\begin{eqnarray*}
\psi(\cdot, t) &=&\int_0^t T_1(t-s)(\sigma_1\phi(\cdot, s)v_\tau(\cdot, s)- (m\rho(\cdot, s)-\lambda_1)\psi(\cdot, s))ds\\
          &\le & \int_0^t T_1(t-s)(\sigma_1\phi(\cdot, s)v(\cdot, s))ds.
\end{eqnarray*}
It then follows that 
\begin{eqnarray}
\|\psi(\cdot, t)\|_{\Omega_*, \infty}         &\le & \int_0^t \| T_1(t-s)(\sigma_1\phi(\cdot, s)v(\cdot, s)) \|_{\Omega_*, \infty}ds \nonumber\\
              &\le& M_2 \int_0^t e^{-\lambda_1(t-s)}\|v(\cdot, s)\|_{\Omega, \infty}ds,\label{conv1}
\end{eqnarray}
where $M_2=M_1M \|\sigma_1\|_{\Omega_*, \infty}$ with $M$ specified in Proposition \ref{prop_bound}.

By the equation for $v$, we have 
\begin{eqnarray*}
v(\cdot, t) &=&T_2(t)v_0+\int_0^t T_2(t-s)(B(\cdot, s)- \int_0^\infty(\lambda(a)-\lambda_*)i(\cdot, a, t)da)ds\\
          &\le & T_2(t)v_0+\int_0^t T_2(t-s)B(\cdot, s)dads.
\end{eqnarray*}
It then follows that 
\begin{eqnarray}
\|v(\cdot, t)\|_{\Omega, \infty}         &\le &  \|T_2(t)v_0\|_{\Omega, \infty}+\int_0^t \|T_2(t-s)B(\cdot, s)\|_{\Omega, \infty}dads    \nonumber\\
              &\le& M_1e^{-\lambda_* t}\|v_0\|_{\Omega, \infty} + \epsilon M_1 \|\sigma_2\|_{\Omega_*, \infty} \int_0^t e^{-\lambda_*(t-s)}\|\psi(\cdot, s)\|_{\Omega_*, \infty}ds.\label{conv2}
\end{eqnarray}
Combining \eqref{conv1} and \eqref{conv2}, we have 
\begin{equation*}
\|\psi(\cdot, t)\|_{\Omega_*, \infty}         \le  M_3\int_0^t e^{-\lambda_1(t-s)} e^{-\lambda_* s}ds+ \epsilon M_4 \int_0^t e^{-\lambda_1(t-s)} \int_0^s e^{-\lambda_*(s-r)}\|\psi(\cdot, r)\|_{\Omega_*, \infty}drds.
\end{equation*}
where $M_3=M_1M_2\|v_0\|_{\Omega, \infty}$ and $M_4=M_1M_2\|\sigma_2\|_{\Omega_*, \infty}$.
Notice that 
\begin{eqnarray*}
 \int_0^t e^{-\lambda_1(t-s)} \int_0^s e^{-\lambda_*(s-r)}\|\psi(\cdot, r)\|_{\Omega_*, \infty}drds&=&e^{-\lambda_1t}\int_0^t e^{\lambda_* r} \|\psi(\cdot, r)\|_{\Omega_*, \infty} \int_r^t e^{(\lambda_1-\lambda_*)s}dsdr\\
&\le& \frac{1}{\lambda_*-\lambda_1}\int_0^t e^{-\lambda_1(t-r)} \|\psi(\cdot, r)\|_{\Omega_*, \infty}dr.
\end{eqnarray*}
It then follows that 
\begin{equation*}
\|\psi(\cdot, t)\|_{\Omega_*, \infty}         \le  \frac{M_3}{\lambda_*-\lambda_1}e^{-\lambda_1 t}+ \frac{\epsilon M_4}{\lambda_*-\lambda_1}\int_0^t e^{-\lambda_1(t-r)} \|\psi(\cdot, r)\|_{\Omega_*, \infty}dr.
\end{equation*}
Choose $\epsilon=(\lambda_*-\lambda_1)\lambda_1/(2M_4)$. Then by the Gronwall's inequality, there exists $M_5>0$ such that for all $t\ge 0$
$$\|\psi(\cdot, t)\|_{\Omega_*, \infty}  \le M_5 e^{-\frac{\lambda_1 t}{2}}.$$
Therefore, \eqref{psi_uni} holds.  
\end{proof}

\begin{lemma}
Suppose that assumptions A1-A6 hold, and let $(\phi(x, t), \psi(x, t), u(x, t), i(x, a, t))$ be the solution of system \eqref{vector} and \eqref{host}. Then  $u_*>0$, where $u_*$ is specified in Theorem \ref{theorem_con}.
\end{lemma}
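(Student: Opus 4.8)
The plan is to argue by contradiction: I would suppose $u_*=0$ and derive a strictly positive lower bound on the total host mass, which is incompatible with that supposition. The engine of the argument is the preceding Lemma \ref{lemma_psibound}, which under the standing hypothesis $u_*=0$ already delivers the crucial integrability $\int_0^\infty\|\psi(\cdot,s)\|_{\Omega_*,\infty}\,ds<\infty$. The present lemma is then essentially the step that converts this integrability into the contradiction.

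First I would track the scalar quantity $U(t)=\int_\Omega u(x,t)\,dx$. Integrating the $u$-equation \eqref{ueq} over $\Omega$ and using the divergence theorem together with the homogeneous Neumann condition $\partial u/\partial\eta=0$ on $\partial\Omega$ annihilates the diffusion term, leaving
\begin{equation*}
\frac{d}{dt}U(t)=-\int_\Omega B(x,t)\,dx=-\int_{\Omega_*}\sigma_2(x)u(x,t)\psi(x,t)\,dx,
\end{equation*}
where the last equality uses $B\equiv0$ on $\Omega-\bar\Omega_*$. Next I would convert this identity into a linear differential inequality: bounding the incidence integrand and invoking the nonnegativity of $u$ (so that $\int_{\Omega_*}u\,dx\le\int_\Omega u\,dx=U(t)$) gives
\begin{equation*}
\frac{d}{dt}U(t)\ge -\|\sigma_2\|_{\Omega_*,\infty}\,\|\psi(\cdot,t)\|_{\Omega_*,\infty}\,U(t).
\end{equation*}

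Writing $h(t)=\|\sigma_2\|_{\Omega_*,\infty}\|\psi(\cdot,t)\|_{\Omega_*,\infty}$ and multiplying by the integrating factor $\exp(\int_0^t h(s)\,ds)$ shows that $t\mapsto U(t)\exp(\int_0^t h)$ is nondecreasing, whence
\begin{equation*}
U(t)\ge U(0)\exp\!\Big(-\int_0^t h(s)\,ds\Big)\ge U(0)\exp\!\Big(-\|\sigma_2\|_{\Omega_*,\infty}\int_0^\infty\|\psi(\cdot,s)\|_{\Omega_*,\infty}\,ds\Big).
\end{equation*}
By Lemma \ref{lemma_psibound} the integral in the exponent is finite, and by assumption A6 we have $U(0)=\int_\Omega u_0\,dx>0$; hence the right-hand side is a strictly positive constant independent of $t$. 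Consequently $u_*=\lim_{t\to\infty}U(t)/|\Omega|>0$, contradicting the supposition $u_*=0$, and therefore $u_*>0$.

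I do not expect a serious obstacle at this stage, because the genuinely delicate estimate—the exponential decay of $\psi$, and hence its integrability, under the hypothesis $u_*=0$—has already been established in Lemma \ref{lemma_psibound} via the coupled Volterra-type estimates \eqref{conv1}--\eqref{conv2} and the smallness of $\epsilon$. The only points here that require care are the clean vanishing of the diffusion term under the Neumann condition and the correct orientation of the Gronwall step, namely that it must be applied to produce a \emph{lower} bound on $U$ rather than the usual upper bound.
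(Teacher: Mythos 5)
Your proposal is correct and follows essentially the same route as the paper: assume $u_*=0$, derive the differential inequality $\frac{d}{dt}U(t)\ge -\|\sigma_2\|_{\infty}\|\psi(\cdot,t)\|_{\Omega_*,\infty}U(t)$, integrate it to get the lower bound $U(t)\ge U(0)\exp\bigl(-\|\sigma_2\|_{\infty}\int_0^\infty\|\psi(\cdot,s)\|_{\Omega_*,\infty}\,ds\bigr)>0$, and conclude via the integrability supplied by Lemma \ref{lemma_psibound} that $u_*>0$, a contradiction. Your write-up merely spells out the steps the paper leaves implicit (the divergence theorem with the Neumann condition, and the integrating-factor form of Gronwall).
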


\begin{proof}
Assume to the contrary that $u_*=0$. By the first equation of \eqref{host}, we have
\begin{equation*}
\frac{\partial U}{\partial t}\ge - \|\sigma_2\|_{\Omega, \infty} \|\psi(\cdot, s)\|_{\Omega_*, \infty}U(t),
\end{equation*}
which implies that
\begin{equation*}
 U(t)\ge  U(0) \exp\left( -\|\sigma_2\|_{\Omega, \infty} \int_0^t \|\psi(\cdot, s)\|_{\Omega_*, \infty} ds\right) >0.
\end{equation*}
Hence $$u_*\ge \bar u_0\exp\left( -\|\sigma_2\|_{\Omega, \infty} \int_0^\infty \|\psi(\cdot, s)\|_{\Omega_*, \infty} ds\right) >0,$$
which is a contradiction  by Lemma \ref{lemma_psibound}. 
\end{proof}

\end{document}